\documentclass[11pt]{amsart}
\usepackage{mathtools} \mathtoolsset{showonlyrefs,showmanualtags}
\usepackage{hyperref} 
\hypersetup{
    colorlinks=true,       
    linkcolor=blue,          
    citecolor=magenta,        
    filecolor=magenta,      
    urlcolor=cyan           
}

\usepackage{xcolor}
\numberwithin{equation}{section}
\allowdisplaybreaks[4]
\usepackage{amsmath,amsthm,amscd,amssymb,amsfonts, amsbsy}
\usepackage{latexsym}
\usepackage{exscale}
\newtheorem{Theorem}[equation]{Theorem}
\newtheorem{Proposition}[equation]{Proposition}

\newtheorem{Lemma}[equation]{Lemma}

\def\XXint#1#2#3{{\setbox0=\hbox{$#1{#2#3}{\int}$}
\vcenter{\hbox{$#2#3$}}\kern-.5\wd0}}

\def\bbR{\mathbb{R}}

\def\ch{\rm{ch}}
\def\dw{\textup{d}w}

\newcommand{\abs}[1]{|#1|}
\newcommand{\ave}[1]{\langle #1 \rangle}

\begin{document}

\title[$A_p$-$A_\infty$ estimates for square functions]{Weak and strong $A_p$-$A_\infty$ estimates for square functions and related operators}

\author{Tuomas P. Hyt\"onen}

\address{Department of Mathematics and Statistics, P.O.B. 68 (Gustaf
H\"all\-str\"omin katu 2b), FI-00014 University of Helsinki, Finland}
\email{tuomas.hytonen@helsinki.fi}

\author{Kangwei Li}

\address{Department of Mathematics and Statistics, P.O.B. 68 (Gustaf
H\"all\-str\"omin katu 2b), FI-00014 University of Helsinki, Finland}
\curraddr{\sc{BCAM--Basque Center for Applied Mathematics, Mazarredo, 14. 48009 Bilbao, Basque Country, Spain}}

\email{kangwei.nku@gmail.com, \,\, kli@bcamath.org}

\thanks{T.P.H and K.L. are supported by the European Union through the ERC
Starting Grant
``Analytic-probabilistic methods for borderline singular integrals''.
They are members of the Finnish Centre of Excellence in Analysis and
Dynamics Research.
}

\date{\today}

\keywords{$A_p$-$A_\infty$ estimates; square functions}
\subjclass[2010]{42B25}

\begin{abstract}
We prove sharp weak and strong type weighted estimates for a class of dyadic operators that includes majorants of both standard singular integrals and square functions. Our main new result is the optimal bound $[w]_{A_p}^{1/p}[w]_{A_\infty}^{1/2-1/p}\lesssim [w]_{A_p}^{1/2}$ for the weak type norm of square functions on $L^p(w)$ for $p>2$; previously, such a bound was only known with a logarithmic correction. By the same approach, we also recover several related results in a streamlined manner.
\end{abstract}

\maketitle

\section{Introduction}

We study weighted inequalities for the (in general nonlinear) operator
\[
A_{\mathcal S}^r (f)=\Big(\sum_{Q\in \mathcal S}\langle f \rangle_Q^r \mathbf 1_Q\Big)^{\frac 1r},
\qquad\langle f\rangle_Q:=\frac{1}{|Q|}\int_Q f,
\]
where $r>0$ and $\mathcal S$ is a sparse collection of dyadic cubes, i.e., there are pairwise disjoint subsets $E(S)\subset S$ such that $|E(S)|\geq\frac12|S|$. For $r=1$ and $r=2$, these operators dominate large classes of Calder\'on--Zygmund singular integrals and Littlewood--Paley square functions, respectively (see \cite{Lerner1,Lerner2} and \cite{L_A2} for details). Thus the various norm inequalities that we prove for $A_{\mathcal S}^r$ immediately translate to corresponding estimates for these classes of classical operators, recovering results like the $A_2$ theorem of the first author~\cite{Hytonen2012} and its several variants and elaborations.

More precisely, we are concerned with quantifying the dependence of various weighted operator norms on a mixture of the two-weight $A_p$ characteristic
\begin{equation*}
  [w,\sigma]_{A_p}:=\sup_Q\ave{w}_Q\ave{\sigma}_Q^{p-1}
\end{equation*}
and the individual $A_\infty$ characteristics
\begin{equation*}
  [w]_{A_\infty}:=\sup_Q\frac{1}{w(Q)}\int_Q M(1_Q w)
\end{equation*}
and $[\sigma]_{A_\infty}$. The study of such mixed bounds was initiated in \cite{HP}. All our estimates will be stated in a dual-weight formulation, in which the classical one-weight case corresponds to the choice $\sigma=w^{1-p'}$. Note that $[w,\sigma]_{A_p}$ becomes the usual one-weight $A_p$ characteristic $[w]_{A_p}:=[w,w^{1-p'}]_{A_p}$ with this choice.

Since we are dealing with dyadic operators, we also consider the dyadic versions of the weight characteristics, where the supremums above are over dyadic cubes only and $M$ denotes the dyadic maximal operator. This is a standing convention throughout this paper without further notice. Note, however, that the domination of classical operators typically involves a sum of boundedly many $A_{\mathcal S}^r$'s with respect to different dyadic systems, and for this reason the non-dyadic weight characteristics appear in such results.

The following strong type bound has been proved by Lacey and the second author in \cite{LL}, but we shall give a new proof here.

\begin{Theorem}\label{thm:m}
Let $1<p<\infty$ and $r>0$. Let $w,\sigma$ be a pair of weights. Then
\[
\|A_{\mathcal S}^r(\cdot \sigma)\|_{L^p(\sigma)\rightarrow L^p(w)}\le C [w,\sigma]_{A_p}^{\frac 1p}([w]_{A_\infty}^{(\frac 1r-\frac 1p)_+}+[\sigma]_{A_\infty}^{\frac 1p}).
\]
\end{Theorem}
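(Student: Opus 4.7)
The plan is to prove two separate upper bounds whose sum is the right-hand side of the theorem:
\[
\|A_{\mathcal S}^r(f\sigma)\|_{L^p(w)} \le C[w,\sigma]_{A_p}^{1/p}[\sigma]_{A_\infty}^{1/p}\|f\|_{L^p(\sigma)}\qquad(\text{any }r>0),
\]
and, for $r<p$,
\[
\|A_{\mathcal S}^r(f\sigma)\|_{L^p(w)} \le C[w,\sigma]_{A_p}^{1/p}[w]_{A_\infty}^{1/r-1/p}\|f\|_{L^p(\sigma)}.
\]
The main technical ingredients are the pointwise $A_p$ splitting
\[
(\sigma(Q)/|Q|)^r w(Q) \le [w,\sigma]_{A_p}^{r/p}\sigma(Q)^{r/p}w(Q)^{1-r/p},
\]
the sparse $A_\infty$-Carleson estimate $\sum_{Q\in\mathcal S,\,Q\subset R}\mu(Q)\le 2[\mu]_{A_\infty}\mu(R)$, and principal-cube (stopping-time) hierarchies $\mathcal F$ (adapted to $f$ and $\sigma$) and $\mathcal G$ (adapted to $g$ and $w$).

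For the first bound, the case $r\ge p$ is reduced to $r=p$ via the pointwise inequality $A_{\mathcal S}^r f\le A_{\mathcal S}^p f$, which follows from $\|\cdot\|_{\ell^r}\le\|\cdot\|_{\ell^p}$ when $r\ge p$. At $r=p$, I expand $\|A_{\mathcal S}^p(f\sigma)\|_{L^p(w)}^p=\sum_{Q\in\mathcal S}\langle f\sigma\rangle_Q^p w(Q)$, apply the stopping inequality $\langle f\sigma\rangle_Q\le 2(\sigma(Q)/|Q|)\langle f\rangle_F^\sigma$ when $\pi_{\mathcal F}Q=F$, then the pointwise $A_p$ bound $(\sigma(Q)/|Q|)^p w(Q)\le[w,\sigma]_{A_p}\sigma(Q)$, then the $\sigma$-Carleson estimate $\sum_{Q\subset F,\,Q\in\mathcal S}\sigma(Q)\le 2[\sigma]_{A_\infty}\sigma(F)$, and finally the $\sigma$-sparseness of $\mathcal F$ to close via $\sum_F(\langle f\rangle_F^\sigma)^p\sigma(F)\lesssim\|f\|_{L^p(\sigma)}^p$. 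The subcase $r<p$ follows in parallel, after dualizing in $L^{p/r}(w)$ and running a single $\mathcal F$-hierarchy.

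The harder second bound requires $r<p$. I dualize to
$\|A_{\mathcal S}^r(f\sigma)\|_{L^p(w)}^r=\sup_{g\ge 0,\,\|g\|_{L^{(p/r)'}(w)}\le 1}\sum_Q\langle f\sigma\rangle_Q^r w(Q)\langle g\rangle_Q^w$,
introduce both principal-cube hierarchies $\mathcal F$ and $\mathcal G$, and group cubes by the pair $(F,G)=(\pi_{\mathcal F}Q,\pi_{\mathcal G}Q)$. The core quantity becomes
\[
\Sigma(F,G)=\sum_{Q:\,\pi_{\mathcal F}Q=F,\,\pi_{\mathcal G}Q=G}(\sigma(Q)/|Q|)^r w(Q).
\]
Using the $A_p$ splitting and Hölder in exponents $(p/r,\,p/(p-r))$ gives $\Sigma(F,G)\le[w,\sigma]_{A_p}^{r/p}\bigl(\sum_Q\sigma(Q)\bigr)^{r/p}\bigl(\sum_Q w(Q)\bigr)^{1-r/p}$, and the $w$-Carleson bound on the $w$-sum produces the desired factor $[w]_{A_\infty}^{1-r/p}=[w]_{A_\infty}^{r(1/r-1/p)}$.

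The main obstacle is to prevent an extraneous $[\sigma]_{A_\infty}$ from appearing on the $\sigma$-side. I intend to handle this by routing the $\sigma(Q)^{r/p}$ contribution not through a bare $\sigma$-Carleson on $\mathcal S$ (which costs $[\sigma]_{A_\infty}$), but into a weighted Carleson embedding driven by the $\sigma$-sparseness of the family $\mathcal F$, so that after summing over $F$ one obtains the clean factor $\|f\|_{L^p(\sigma)}^r$ with no $A_\infty$ cost; dually, the $(\langle g\rangle_G^w)$-averages are absorbed via the $w$-sparseness of $\mathcal G$, producing $\|g\|_{L^{(p/r)'}(w)}$. Balancing the Hölder exponents against the sparse Carleson gains available on each of the two hierarchies, so that each of $[w]_{A_\infty}$ and $[\sigma]_{A_\infty}$ is invoked at most once and with the right exponent, is the key technical point.
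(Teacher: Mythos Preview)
Your plan for $r\ge p$ is fine and matches the paper's treatment of that range. The difficulty is entirely in the case $r<p$, and there your strategy has a structural problem.

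You propose to prove \emph{two separate full operator-norm bounds}, one with $[\sigma]_{A_\infty}^{1/p}$ and one with $[w]_{A_\infty}^{1/r-1/p}$. If both held, the operator norm would be controlled by their \emph{minimum}, which is strictly stronger than the theorem's sum. More concretely, your argument for the second bound breaks at exactly the point you flag as ``the main obstacle'': after the H\"older step
\[
\Sigma(F,G)\le[w,\sigma]_{A_p}^{r/p}\Big(\sum_Q\sigma(Q)\Big)^{r/p}\Big(\sum_Q w(Q)\Big)^{1-r/p},
\]
the inner sums run over $Q\in\mathcal S$ with $Q\subset F\cap G$. The family $\mathcal S$ is only Lebesgue-sparse, so the only available control on $\sum_{Q\in\mathcal S,\,Q\subset R}\sigma(Q)$ is the $A_\infty$-Carleson bound $\lesssim[\sigma]_{A_\infty}\sigma(R)$. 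The ``$\sigma$-sparseness of $\mathcal F$'' is the statement $\sum_{F'\in\mathcal F,\,F'\subset F}\sigma(F')\lesssim\sigma(F)$ for the \emph{stopping} cubes; it says nothing about the many $\mathcal S$-cubes sitting between two consecutive stopping levels, which is exactly where the $\sigma$-mass accumulates. There is no ``routing'' that converts a sum over $\mathcal S$ into a sum over $\mathcal F$ without paying $[\sigma]_{A_\infty}$. The honest outcome of your double-stopping + H\"older scheme is the \emph{product}
\[
[w,\sigma]_{A_p}^{1/p}\,[\sigma]_{A_\infty}^{1/p}\,[w]_{A_\infty}^{1/r-1/p},
\]
which is worse than the theorem. (The same issue afflicts your ``in parallel'' claim for the first bound when $r<p$: with a single $\mathcal F$-hierarchy you still have to handle $\langle g\rangle_Q^w$, and doing so forces either a $\mathcal G$-hierarchy or a $w$-Carleson sum, reintroducing $[w]_{A_\infty}$.)

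The paper's route for $r<p$ is genuinely different and avoids this trap. It does \emph{not} try to bound the full operator norm by each term separately. Instead it invokes a two-weight testing characterization (Theorem~\ref{thm:m1}, via the Lacey--Sawyer--Uriarte-Tuero theorem for positive dyadic operators) to write
\[
\|A_{\mathcal S}^r(\cdot\,\sigma)\|_{L^p(\sigma)\to L^p(w)}^r\simeq\mathcal T+\mathcal T^*,
\]
where $\mathcal T$ and $\mathcal T^*$ are localized testing constants. Each testing constant is then estimated on its own (Proposition~\ref{prop:test}), using the Cascante--Ortega--Verbitsky identity to unfold the $L^s$ norm into a nested double sum; in each case only \emph{one} $A_\infty$-Carleson estimate is needed, giving $\mathcal T\lesssim[w,\sigma]_{A_p}^{r/p}[\sigma]_{A_\infty}^{r/p}$ and $\mathcal T^*\lesssim[w,\sigma]_{A_p}^{r/p}[w]_{A_\infty}^{1-r/p}$. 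The sum in the theorem then arises from the sum $\mathcal T+\mathcal T^*$, not from two competing bounds on the same quantity.
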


Here and below, we simplify case analysis by interpreting $[w]_{A_\infty}^0=1$, whether or not $[w]_{A_\infty}$ is finite. The novelties of our approach are two-fold: we make black-box use of certain two-weight theorems, rather than adapting their proofs, and we avoid the ``slicing'' argument, namely, the separate consideration of families of cubes with the $A_p$ characteristic ``frozen'' to a certain value $\ave{w}_Q\ave{\sigma}_Q^{p-1}\eqsim 2^k\leq[w,\sigma]_{A_p}$.

For $r=1$, Theorem~\ref{thm:m} (in combination with the domination of singular integrals by $A_{\mathcal S}^1$) is the $A_p$-$A_\infty$ elaboration, by the first author and Lacey \cite{HL}, of the $A_2$ theorem of \cite{Hytonen2012}.  In this case, a ``slicing-free'' argument was provided in \cite{Hytonen}, but we feel that the present approach is simpler even here.

The benefits of this approach are best seen in the weak type estimate, for which we obtain the following new result:

\begin{Theorem}\label{thm:main}
Let $1<p<\infty$ with $p\neq r$. Let $w,\sigma$ be a pair of weights. Then
\[
\|A_{\mathcal S}^r(\cdot \sigma)\|_{L^p(\sigma)\rightarrow L^{p,\infty}(w)}\le C [w,\sigma]_{A_p}^{\frac 1p}[w]_{A_\infty}^{(\frac 1r-\frac 1p)_+}.
\]
\end{Theorem}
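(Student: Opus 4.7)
The plan is to establish the distributional bound
\[
\lambda^p\, w(\Omega_\lambda) \lesssim [w,\sigma]_{A_p}\,[w]_{A_\infty}^{p(1/r-1/p)_+}\,\|f\|_{L^p(\sigma)}^p,
\qquad \Omega_\lambda := \{A^r_{\mathcal S}(f\sigma)>\lambda\},
\]
via a principal-cube decomposition followed by a two-weight testing argument on the Lorentz side. The first step is a standard $\sigma$-principal cube construction for $f$: let $\mathcal{F}\subset\mathcal{S}$ start from the maximal elements of $\mathcal{S}$ and recursively adjoin the maximal $F'\subsetneq F$ in $\mathcal{S}$ with $\langle f\rangle_{F'}^\sigma > 2\langle f\rangle_F^\sigma$. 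The resulting family is $\sigma$-sparse, and writing $\pi(Q)\in\mathcal{F}$ for the minimal ancestor of $Q\in\mathcal{S}$, one has $\langle f\rangle_Q^\sigma\leq 2\langle f\rangle_{\pi(Q)}^\sigma$, so
\[
A^r_{\mathcal S}(f\sigma)(x) \leq 2\Bigl(\sum_{F\in\mathcal{F}}\bigl(\langle f\rangle_F^\sigma\bigr)^{r}\,G_F(x)\Bigr)^{1/r},
\qquad G_F:=\sum_{\pi(Q)=F}\langle\sigma\rangle_Q^{r}\,\mathbf{1}_Q,
\]
with each $G_F$ supported in $F$. Using the elementary identity $\|g\|_{L^{p,\infty}(w)}=\|g^r\|_{L^{p/r,\infty}(w)}^{1/r}$, the task reduces to showing $\|H\|_{L^{p/r,\infty}(w)} \lesssim [w,\sigma]_{A_p}^{r/p}\,[w]_{A_\infty}^{(1-r/p)_+}\,\|f\|_{L^p(\sigma)}^r$ with $H:=\sum_F (\langle f\rangle_F^\sigma)^r G_F$.

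For the main case $p>r$, the space $L^{p/r,\infty}(w)$ is a genuine Banach Lorentz space and admits the dual description $\|H\|_{L^{p/r,\infty}(w)}\approx\sup_{E}w(E)^{r/p-1}\int_E H\,dw$, where the supremum runs over sets of positive finite $w$-measure. The problem then becomes a testing inequality
\[
\sum_F \bigl(\langle f\rangle_F^\sigma\bigr)^{r}\int_E G_F\,dw \;\lesssim\; [w,\sigma]_{A_p}^{r/p}\,[w]_{A_\infty}^{1-r/p}\,w(E)^{1-r/p}\,\|f\|_{L^p(\sigma)}^r .
\]
I would handle this by H\"older's inequality with exponents $p/r$ and $(p/r)'$: the Carleson-embedding factor $\bigl(\sum_F (\langle f\rangle_F^\sigma)^p\sigma(F)\bigr)^{r/p}\lesssim\|f\|_{L^p(\sigma)}^r$ is immediate from the $\sigma$-sparseness of $\mathcal{F}$, and the residual testing factor is to be estimated by combining the $A_p$ identity $\langle w\rangle_Q\langle\sigma\rangle_Q^{p-1}\leq[w,\sigma]_{A_p}$ with the quantitative $A_\infty$ sum $\sum_{Q\in\mathcal{S},\,Q\subseteq F}\langle w\rangle_Q|E_Q|\lesssim[w]_{A_\infty}w(F)$, a consequence of the definition of $[w]_{A_\infty}$ together with the disjoint major subsets $E_Q\subseteq Q$ supplied by sparseness of $\mathcal{S}$. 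The easier case $p<r$ carries no $A_\infty$ factor at all; Lorentz duality is unavailable ($p/r<1$), but can be replaced by the quasi-subadditivity $\|\sum_F X_F\|_{L^{p/r,\infty}(w)}^{p/r}\lesssim\sum_F\|X_F\|_{L^{p/r,\infty}(w)}^{p/r}$, after which the bound reduces again to the $\sigma$-Carleson embedding.

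The main obstacle is the testing-factor estimate in the case $p>r$: extracting the sharp power $[w]_{A_\infty}^{1-r/p}$ while avoiding any $[\sigma]_{A_\infty}$ contribution demands a delicate balancing of the two budgets. In particular, $\mathcal{F}$ is only $\sigma$-sparse (not Lebesgue-sparse), so a naive summation of $w$-measures over $\mathcal{F}$ is not controlled by $[w]_{A_\infty}$, and one must work with the $\mathcal{S}$-level $A_\infty$ sum in combination with the $A_p$ trade-off. This is precisely where the weak-type bound improves upon the strong-type bound of Theorem \ref{thm:m}: there, the analogous step requires a full Carleson embedding on the $\sigma$-side, producing the additional $[\sigma]_{A_\infty}^{1/p}$ factor, whereas in the weak-type setting the level-set formulation and the Lorentz duality allow us to trade that embedding for a weaker $A_\infty$-type sparse sum, eliminating the $\sigma$ contribution altogether.
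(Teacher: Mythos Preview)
Your plan for the main case $p>r$ identifies the difficulty but does not overcome it, and I do not see that it can be overcome along these lines. After Lorentz duality and the H\"older split, the ``residual testing factor'' carries both the $f$-dependent stopping data (the family $\mathcal F$, the partition $\{\pi(Q)=F\}$, the weights $\sigma(F)^{-r/(p-r)}$) and the level set $E$, and there is no evident way to decouple them so as to produce $[w]_{A_\infty}^{1-r/p}w(E)^{1-r/p}$ without a $[\sigma]_{A_\infty}$ loss: dropping the $E$-dependence via $w(Q\cap E)\le w(Q)$ discards the only handle on $w(E)$, while keeping it blocks the $A_p$/$A_\infty$ algebra you describe. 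The paper bypasses this entanglement by a different mechanism. One first \emph{linearizes}: a maximal-function trick (Lemma~\ref{lm:hl}) shows that $\|A_{\mathcal S}^r(\cdot\sigma)\|_{L^p(\sigma)\to L^{p,\infty}(w)}^r$ is comparable to the $L^{p/r}(\sigma)\to L^{p/r,\infty}(w)$ norm of the \emph{linear} positive operator $g\mapsto\sum_Q\langle\sigma\rangle_Q^{r-1}\langle g\sigma\rangle_Q\mathbf 1_Q$. The Lacey--Sawyer--Uriarte-Tuero testing theorem (Proposition~\ref{prop:char}) then reduces this norm to the single dual testing constant $\mathcal T^*$, which involves only $w$, $\sigma$ and $\mathcal S$---no $f$, no $E$, no principal cubes. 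The bound $\mathcal T^*\lesssim[w,\sigma]_{A_p}^{r/p}[w]_{A_\infty}^{1-r/p}$ is then a direct computation via the Cascante--Ortega--Verbitsky formula (Proposition~\ref{prop:dyadicSum}) and a Kolmogorov-type summation lemma, carried out separately in the subranges $r<p<r+1$ and $p\ge r+1$.

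For $p<r$, your quasi-subadditivity $\|\sum_F X_F\|_{L^{p/r,\infty}(w)}^{p/r}\lesssim\sum_F\|X_F\|_{L^{p/r,\infty}(w)}^{p/r}$ does hold (split each $X_F$ at height $\lambda$ and use Chebyshev on the small part; the constant is $\sim(1-p/r)^{-1}$), so that step is fine. But afterwards you still need $\sum_F(\langle f\rangle_F^\sigma)^p\|G_F\|_{L^{p/r,\infty}(w)}^{p/r}\lesssim[w,\sigma]_{A_p}\|f\|_{L^p(\sigma)}^p$, and invoking ``the $\sigma$-Carleson embedding'' would require $\|G_F\|_{L^{p/r,\infty}(w)}^{p/r}\lesssim[w,\sigma]_{A_p}\,\sigma(F)$; since $G_F\le(A_{\mathcal S}^r(\mathbf 1_F\sigma))^r$, this is essentially the weak-type bound itself tested on $\mathbf 1_F$, so the reduction is circular. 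The paper instead follows the Lacey--Scurry route (Section~\ref{sec:weak}): decompose $\mathcal S$ into level sets $\mathcal S_l=\{Q:\langle f\sigma\rangle_Q\approx 2^{-l}\}$, use the disjoint pieces $E_Q$ within each $\mathcal S_l$ to replace $\langle f\sigma\rangle_Q$ by $\langle f\sigma\mathbf 1_{E_Q}\rangle_Q$, and finish with Chebyshev and a direct $A_p$ estimate summed in $l$.
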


The case $p<r$ of Theorem~\ref{thm:main} was essentially known and due to Lacey and Scurry \cite{LS2012}, and we merely repeat their one-weight proof in the two-weight case. Note that we do not say anything about the critical exponent $p=r$, as our arguments do not shed any new light into this case.
For $p>r$, however, our bound
\begin{equation*}
  [w,\sigma]_{A_p}^{\frac 1p}[w]_{A_\infty}^{\frac 1r-\frac 1p}
  \lesssim[w]_{A_p}^{\frac 1p}[w]_{A_p}^{\frac 1r-\frac 1p}=[w]_{A_p}^{\frac 1r}
\end{equation*}
is new even in the one weight case $\sigma=w^{1-p'}$. Indeed, for $r=2$, the previous bounds in the literature had an additional logarithmic factor, taking the form $1+\log[w]_{A_p}$ in \cite{LS2012}, and subsequently improved to $(1+\log[w]_{A_\infty})^{\frac12}$ by Domingo-Salazar, Lacey, and Rey \cite{DLR}. By analogy to the failure of the $A_1$ conjecture (see \cite{NRVV}), a logarithmic correction is probably necessary in the critical case $p=r$. We are able to avoid it for $p>r$ by using a proof strategy specific to this range of exponents, whereas \cite{DLR,LS2012} treat all $p\geq r$ as one case.

Theorem~\ref{thm:main} with $r=2$ completes the picture of sharp weighted inequalities for square functions, aside from the remaining critical case of $p=2$. Namely, $[w]_{A_p}^{\max(\frac1p,\frac12)}$ is the optimal bound among all possible bounds of the form $\Phi([w]_{A_p})$ with an increasing function $\Phi$. This was shown by Lacey and Scurry \cite{LS2012} in the category of power type function $\Phi(t)=t^\alpha$; a variant of their argument proves the general claim, as we show in the last section.

%

To prove the above results, we need the following characterization, which is essentially due to Lai \cite{Lai}; we supply the necessary details to cover the cases that were not explicitly treated in \cite{Lai}.

\begin{Theorem}\label{thm:m1}
Let $1<p<\infty$ and $r>0$. Let $w,\sigma$ be a pair of weights. Then
\begin{equation*}
\begin{split}
\|A_{\mathcal S}^r(\cdot \sigma)\|_{L^p(\sigma)\rightarrow L^p(w)}^r &\simeq
\begin{cases}
\mathcal T+\mathcal T^*, & p>r,\\
\mathcal T, & 1<p\le r,
\end{cases} \\
\|A_{\mathcal S}^r(\cdot \sigma)\|_{L^p(\sigma)\rightarrow L^{p,\infty}(w)}^r &\simeq\ \ \mathcal T^*, \qquad p>r,
\end{split}
\end{equation*}
where
\begin{align*}
\mathcal T&= \sup_{R\in \mathcal S} \sigma(R)^{-\frac rp} \Big\| \sum_{\substack{Q\in\mathcal S\\ Q\subset R}} \langle \sigma\rangle_Q^r \mathbf 1_Q           \Big\|_{L^{\frac pr}(w)},\\
\mathcal T^*&= \sup_{R\in\mathcal S} w(R)^{-\frac 1{(\frac pr)'}} \Big\| \sum_{\substack{Q\in\mathcal S\\ Q\subset R}} \langle \sigma\rangle_Q^{r-1} \langle w\rangle_Q \mathbf 1_Q           \Big\|_{L^{(\frac pr)'}(\sigma)}.
\end{align*}
\end{Theorem}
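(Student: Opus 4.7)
I follow Lai's strategy for characterising two-weight norm inequalities of positive dyadic operators, adapted to the $r$-th power non-linearity of $A_{\mathcal S}^r$. The argument splits into necessity (testing constants $\lesssim$ operator norm) and sufficiency (operator norm $\lesssim$ testing constants), with the latter being the main work.

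\emph{Necessity.} For $\mathcal T$, plug $f=\mathbf 1_R$ into $A_{\mathcal S}^r(\cdot\sigma)$; since $\langle\mathbf 1_R\sigma\rangle_Q=\langle\sigma\rangle_Q$ for $Q\subset R$, this immediately gives $\mathcal T\lesssim\|A_{\mathcal S}^r(\cdot\sigma)\|^r$. For $\mathcal T^*$ in the strong-type case, I dualise via $\|g\|_{L^{p/r}(w)}=\sup\{\int gh\,dw:\|h\|_{L^{(p/r)'}(w)}\le 1\}$, valid since $p/r>1$ when $p>r$; this turns the norm into the bilinear form $\sum_Q\langle f\sigma\rangle_Q^r\langle hw\rangle_Q|Q|$, and testing with $h=\mathbf 1_R$ while taking the supremum defining the $L^{(p/r)'}(\sigma)$-norm of $\sum_{Q\subset R}\langle\sigma\rangle_Q^{r-1}\langle w\rangle_Q\mathbf 1_Q$ recovers $\mathcal T^*$. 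For the weak-type case, the analogous necessity uses the Marcinkiewicz description $\|g\|_{L^{q,\infty}(w)}\simeq\sup_E w(E)^{-1/q'}\int_E g\,dw$ with $q=p/r>1$ applied to $g=(A_{\mathcal S}^r(f\sigma))^r$, followed by a test function $f$ built from a stopping-time decomposition of the dual $\phi\in L^{p/r}(\sigma)$ (needed because, unlike the strong-type case, the dual variable is constrained to be an indicator).

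\emph{Sufficiency.} For the strong- and weak-types with $p>r$, the central device is the pointwise bound $\langle f\sigma\rangle_Q\le\langle\sigma\rangle_Q\,M_\sigma^d f(x)$ valid on $Q$, which linearises the $r$-th power: for $r\ge 1$ it gives $\langle f\sigma\rangle_Q^{r-1}\langle f\sigma\rangle_Q|Q|\le\langle\sigma\rangle_Q^{r-1}\int_Q(M_\sigma^d f)^{r-1}f\,d\sigma$, with a symmetric modification for $r<1$. For the weak-type estimate I multiply by $\langle\mathbf 1_E w\rangle_Q=w(Q\cap E)/|Q|$ and sum in $Q$, then apply H\"older's inequality in $L^{(p/r)'}(\sigma)\times L^{p/r}(\sigma)$ and the $L^p(\sigma)$-boundedness of $M_\sigma^d$ to obtain
\[
\sum_Q\langle f\sigma\rangle_Q^r w(Q\cap E)\lesssim\|f\|_{L^p(\sigma)}^r\Big\|\sum_Q\langle\sigma\rangle_Q^{r-1}\langle\mathbf 1_E w\rangle_Q\mathbf 1_Q\Big\|_{L^{(p/r)'}(\sigma)}.
\]
A Calder\'on--Zygmund/stopping-time decomposition of $E$ reduces the last factor to indicator-testing on dyadic cubes, yielding $\lesssim\mathcal T^*\cdot w(E)^{1/(p/r)'}$, and closing the weak-type bound with $\mathcal T^*$ alone. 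For the strong-type $p>r$ case, one must instead pair against a general $h\in L^{(p/r)'}(w)$; a parallel stopping-time on $h$ in the measure $w$ decouples the bilinear sum into contributions controlled respectively by $\mathcal T$ (a ``diagonal'' Carleson piece) and $\mathcal T^*$ (an ``off-diagonal'' dual piece). For $1<p\le r$, the subadditivity $\|\sum_F g_F\|_{L^{p/r}(w)}^{p/r}\le\sum_F\|g_F\|_{L^{p/r}(w)}^{p/r}$ (a consequence of $p/r\le 1$), combined with a stopping-time decomposition of $f$ indexed by a principal family $\mathcal F$, reduces the $L^{p/r}(w)$-norm of $\sum_Q\langle f\sigma\rangle_Q^r\mathbf 1_Q$ to $\mathcal T^{p/r}\sum_F\langle f\rangle_{F,\sigma}^p\sigma(F)$; the Carleson embedding for principal cubes then bounds the sum by $\|f\|_{L^p(\sigma)}^p$, giving $\|A_{\mathcal S}^r\|^r\lesssim\mathcal T$ without any dual testing.

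The main obstacle is the $r$-th power non-linearity, which rules out a direct application of the standard ($r=1$) Sawyer two-weight theory for positive dyadic operators. The combination of the maximal-function linearisation and a parallel stopping-time decomposition overcomes this, reducing the non-linear problem to linear two-weight testing estimates controlled precisely by $\mathcal T$ and $\mathcal T^*$.
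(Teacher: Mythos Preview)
Your argument for $1<p\le r$ is exactly the paper's. For $p>r$, however, the paper takes a genuinely different and more modular route: it first proves a lemma showing that
\[
  \|A_{\mathcal S}^r(\cdot\sigma)\|_{L^p(\sigma)\to Y^p(w)}^r
  \simeq
  \sup_{\|f\|_{L^p(\sigma)}=1}\Big\|\sum_{Q\in\mathcal S}\langle\sigma\rangle_Q^r\langle f^r\rangle_Q^\sigma\mathbf 1_Q\Big\|_{Y^{p/r}(w)},
\]
for $Y\in\{L,L^{\cdot,\infty}\}$, via the two-sided maximal function trick $(\langle f\rangle_Q^\sigma)^r\le\langle(M_\sigma f)^r\rangle_Q^\sigma$ and $\langle f^r\rangle_Q^\sigma\le(\langle M_{\sigma,r}f\rangle_Q^\sigma)^r$. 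This converts the nonlinear operator into the \emph{linear} positive operator $T_\tau(g\sigma)=\sum_Q\langle\sigma\rangle_Q^{r-1}\langle g\sigma\rangle_Q\mathbf 1_Q$ acting $L^{p/r}(\sigma)\to Y^{p/r}(w)$, and then the Lacey--Sawyer--Uriarte-Tuero testing theorem is applied as a black box to obtain $\mathcal T,\mathcal T^*$. Your approach instead blends the linearisation and the two-weight testing into a single direct argument with stopping times, essentially reproving the LSU characterisation in context. Both are valid; the paper's organisation buys a clean separation of the nonlinearity from the two-weight machinery (and avoids any case split in $r\gtrless 1$, which your linearisation via $(M_\sigma f)^{r-1}f$ does require). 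One place where your sketch is thin: in the necessity of $\mathcal T^*$ (both strong and weak type), dualising and setting $h=\mathbf 1_R$ produces $\sum_{Q\subset R}\langle f\sigma\rangle_Q^r w(Q)$, whereas $\mathcal T^*$ involves $\sum_{Q\subset R}\langle\sigma\rangle_Q^r\langle g\rangle_Q^\sigma w(Q)$; passing between $(\langle f\rangle_Q^\sigma)^r$ and $\langle g\rangle_Q^\sigma$ is precisely the nonlinearity issue, and your stopping-time construction of $f$ from $g$ should be spelled out. The paper's lemma handles this step once and for all, in both directions, before ever invoking testing.
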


The case $p>r$ of Theorems~\ref{thm:m} and \ref{thm:main} is a consequence of Theorem~\ref{thm:m1} and the following, which contains the technical core of this paper.

\begin{Proposition}\label{prop:test}
Let $r>0$ and $1<p<\infty$. For $\mathcal T$ and $\mathcal T^*$ as in Theorem~\ref{thm:m1}, we have
\begin{equation*}
  \mathcal T  \lesssim [w,\sigma]_{A_p}^{\frac rp}[\sigma]_{A_\infty}^{\frac rp},\\
\end{equation*}
and
\begin{equation*}
  \mathcal T^* \lesssim [w,\sigma]_{A_p}^{\frac rp}[w]_{A_\infty}^{1-\frac rp},\qquad p>r.
\end{equation*}
\end{Proposition}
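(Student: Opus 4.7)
\emph{Plan.}
I prove the two bounds separately, relying on two basic pillars: the cube-wise $A_p$ identity $\langle w\rangle_Q\langle\sigma\rangle_Q^{p-1}\le [w,\sigma]_{A_p}$, and the sparse Carleson estimates
\[\sum_{Q\in\mathcal S,\,Q\subset R}\sigma(Q)\le 2[\sigma]_{A_\infty}\sigma(R),\qquad \sum_{Q\in\mathcal S,\,Q\subset R}w(Q)\le 2[w]_{A_\infty}w(R),\]
which follow by testing the $A_\infty$-maximal function against the disjoint sparse subsets $\{E(Q)\}_Q\subset\mathcal S$. For $\mathcal T$ in the easy range $p\le r$, the exponent $p/r\le 1$ allows the subadditive pointwise bound $(\sum_i a_i)^{p/r}\le\sum_i a_i^{p/r}$, which gives
\[\int_R\Big(\sum_{Q\subset R}\langle\sigma\rangle_Q^r\mathbf 1_Q\Big)^{p/r}\dw\le\sum_Q\langle\sigma\rangle_Q^{p-1}\langle w\rangle_Q\sigma(Q)\le 2[w,\sigma]_{A_p}[\sigma]_{A_\infty}\sigma(R),\]
and the bound on $\mathcal T$ follows by taking the $r/p$-th power.

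In the harder range $p>r$ (the only case relevant to $\mathcal T^*$), I dualize the $L^{p/r}(w)$-norm in the case of $\mathcal T$ (respectively, $L^{(p/r)'}(\sigma)$ for $\mathcal T^*$). Both testing inequalities then reduce, via the algebraic identity $\langle\sigma\rangle_Q^{r-1}\sigma(Q)=\langle\sigma\rangle_Q^r|Q|$, to controlling sums of the type
\[\sum_{Q\subset R}\langle\sigma\rangle_Q^{r-1}\langle w\rangle_Q\cdot(\text{test average})\cdot(\text{mass of }Q).\]
The key algebraic move is the $A_p$-factorization at exponent $r/p$,
\[\langle\sigma\rangle_Q^{r-1}\langle w\rangle_Q=\bigl(\langle\sigma\rangle_Q^{p-1}\langle w\rangle_Q\bigr)^{r/p}\Big(\frac{\langle w\rangle_Q}{\langle\sigma\rangle_Q}\Big)^{(p-r)/p}\le[w,\sigma]_{A_p}^{r/p}\Big(\frac{w(Q)}{\sigma(Q)}\Big)^{(p-r)/p},\]
which extracts $[w,\sigma]_{A_p}^{r/p}$ at exactly the claimed power and leaves a residual factor $(w(Q)/\sigma(Q))^{(p-r)/p}$. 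A subsequent H\"older split with exponents $(p/r,p/(p-r))$ separates the residual sum into: a ``Carleson'' factor summed by the relevant sparse Carleson identity above — producing $[\sigma]_{A_\infty}^{r/p}\sigma(R)^{r/p}$ for $\mathcal T$ and $[w]_{A_\infty}^{1-r/p}w(R)^{1-r/p}$ for $\mathcal T^*$ — together with a second ``test'' factor, which is a Carleson embedding of the normalized dual function ($\langle g\rangle_Q^w$ for $\mathcal T$, $\langle h\rangle_Q^\sigma$ for $\mathcal T^*$) against a Carleson-$1$ sequence built from the disjoint $E(Q)$'s, so that no extraneous $A_\infty$-constant enters.

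\emph{Main obstacle.} The technical heart is the H\"older/Carleson bookkeeping in the case $p>r$: the $A_p$-factor must be extracted at precisely the exponent $r/p$, and the Carleson embeddings of the test function must be implemented against \emph{Carleson-$1$} sequences (drawn from the disjoint family $\{E(Q)\}$) so as to suppress the ``wrong'' $A_\infty$-constant. Only this careful accounting yields $\mathcal T$ with $[\sigma]_{A_\infty}^{r/p}$ alone and $\mathcal T^*$ with $[w]_{A_\infty}^{1-r/p}$ alone, rather than with both $A_\infty$ constants; it is also what avoids the slicing of cubes by $A_p$-level sets that would introduce logarithmic corrections as in earlier weak-type treatments.
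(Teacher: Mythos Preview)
Your treatment of $\mathcal T$ in the range $p\le r$ via subadditivity is correct and clean. The gap is in the range $p>r$, where the ``test'' factor produced by your H\"older split is \emph{not} a Carleson-$1$ embedding. After dualizing and applying your $A_p$-factorization, the sum for $\mathcal T$ becomes (up to $[w,\sigma]_{A_p}^{r/p}$)
\[
  \sum_{Q\subset R}\sigma(Q)^{r/p}w(Q)^{(p-r)/p}\langle g\rangle_Q^w,
\]
and H\"older with $(p/r,p/(p-r))$ leaves $\big(\sum_{Q}w(Q)(\langle g\rangle_Q^w)^{(p/r)'}\big)^{(p-r)/p}$ as the test factor. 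The coefficient sequence here is $\{w(Q)\}_{Q\in\mathcal S}$, \emph{not} $\{w(E(Q))\}$; its Carleson constant for $w$ is precisely $\sup_R w(R)^{-1}\sum_{Q\subset R}w(Q)\eqsim[w]_{A_\infty}$, not $1$. Replacing $|Q|$ by $|E(Q)|$ does not help: $\langle w\rangle_Q|E(Q)|$ and $\langle\sigma\rangle_Q|E(Q)|$ are comparable to $w(Q)$ and $\sigma(Q)$ respectively (since $|E(Q)|\ge\tfrac12|Q|$), so the Carleson constant is unchanged. A concrete obstruction: take $\mathcal S=\{[0,2^{-k})\}_{k=0}^N$ and $\sigma$ (or $w$) concentrated near the origin; then $\sum_{Q}\sigma(Q)(\langle h\rangle_Q^\sigma)^s$ with $h=1$ is $\asymp N$, not $\lesssim\|h\|_{L^s(\sigma)}^s=1$. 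Thus your scheme yields $\mathcal T\lesssim[w,\sigma]_{A_p}^{r/p}[\sigma]_{A_\infty}^{r/p}[w]_{A_\infty}^{1-r/p}$ and symmetrically for $\mathcal T^*$, each with an extraneous $A_\infty$ factor---exactly what you flagged as the main obstacle but did not actually overcome.

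The paper avoids duality altogether for $p>r$ and instead applies the Cascante--Ortega--Verbitsky identity
\[
  \Big\|\sum_Q\alpha_Q\mathbf 1_Q\Big\|_{L^s(\mu)}^s\eqsim\sum_Q\alpha_Q\Big(\langle\textstyle\sum_{Q'\subset Q}\alpha_{Q'}\mathbf 1_{Q'}\rangle_Q^\mu\Big)^{s-1}\mu(Q),
\]
which expresses the norm as a single sum with no test function. The inner averages are then controlled by the elementary lemma $\sum_{Q'\in\mathcal S,\,Q'\subset Q}\langle v\rangle_{Q'}^\gamma|Q'|\lesssim\langle v\rangle_Q^\gamma|Q|$ for $\gamma\in[0,1)$, applied with $v=w$ or $v=\sigma$ as appropriate; this is where the case split $r<p<r+1$ versus $p\ge r+1$ enters (choosing which exponent falls in $[0,1)$). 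It is this ``inner--outer'' structure, rather than a H\"older/Carleson split of a dualized sum, that extracts $[w,\sigma]_{A_p}^{r/p}$ while introducing only the single correct $A_\infty$ constant.
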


The plan of the paper is as follows: We start with the proof of Theorem~\ref{thm:m1} and proceed to the estimation of the testing constant $\mathcal T$ and $\mathcal T^*$ as in Proposition~\ref{prop:test}. This completes the proof of Theorems~\ref{thm:m} and~\ref{thm:main} in the case of $p>r$.
The remaining case of Theorem~\ref{thm:main} for $p<r$ is then handled in Section~\ref{sec:weak}. In the final section, we discuss the sharpness of our weak type estimates by modifying the example given by Lacey and Scurry \cite{LS2012}.

\section{Proof of Theorem~\ref{thm:m1}}

As mentioned, Theorem~\ref{thm:m1} is essentially due to Lai \cite{Lai}. Here we make a slight change to extend the range of $r$ from $[1,\infty)$ to $(0,\infty)$. At the same time, we feel that our argument might be slightly easier, in that it makes no reference to the Rubio de Francia algorithm.

\subsection{The case $p>r$}
In this case, we first give the following lemma.
\begin{Lemma}\label{lm:hl} Let $w,\sigma$ be a pair of weights and $p>r>0$. Then
\begin{align}
\|A_{\mathcal S}^r (\cdot \sigma)\|_{L^p(\sigma)\rightarrow L^p(w)}^r &\simeq  \sup_{\|f\|_{L^p(\sigma)=1}}\Big\|\sum_{Q\in \mathcal S} \langle \sigma\rangle_Q^r\langle f^r \rangle_Q^\sigma\mathbf 1_Q\Big\|_{L^{\frac pr}(w)}\label{eq:char}\\
\|A_{\mathcal S}^r(\cdot \sigma)\|_{L^p(\sigma)\rightarrow L^{p,\infty}(w)}^r &\simeq \sup_{\|f\|_{L^p(\sigma)=1}}  \Big \| \sum_{Q\in \mathcal S} \langle \sigma\rangle_Q^r   \langle f^r\rangle_Q^\sigma    \mathbf 1_Q \Big\|_{L^{p/r,\infty}(w)}\label{eq:claim}.
\end{align}
\end{Lemma}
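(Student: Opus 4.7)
The plan is to recognize both equivalences as the operator-norm equivalence $\|A_{\mathcal S}^r(\cdot\sigma)\|^r\simeq\|T\|$ for the linear operator
\begin{equation*}
  Tg := \sum_{Q\in\mathcal S}\langle\sigma\rangle_Q^r\langle g\rangle_Q^\sigma\mathbf 1_Q,
\end{equation*}
where the operator norms are taken from $L^{p/r}(\sigma)$ into $L^{p/r}(w)$ for \eqref{eq:char} and into $L^{p/r,\infty}(w)$ for \eqref{eq:claim}. After the substitution $g=f^r$ the RHS is literally such a norm, whereas the LHS is the norm of $\sum_Q\langle\sigma\rangle_Q^r(\langle f\rangle_Q^\sigma)^r\mathbf 1_Q$, which differs from $Tg$ only by the Jensen gap between $(\langle f\rangle_Q^\sigma)^r$ and $\langle f^r\rangle_Q^\sigma$. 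To treat $r\le 1$ and $r\ge 1$ uniformly, and to recover both strong and weak type simultaneously via the identity $\|h\|_{L^{p,\infty}(w)}^r=\|h^r\|_{L^{p/r,\infty}(w)}$, I will produce \emph{pointwise} comparisons between $A_{\mathcal S}^r(f\sigma)^r$ and $Tg$ after linking $f$ and $g$ by a principal-cube construction.

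Given a nonnegative function, form its $\sigma$-principal cubes $\mathcal F\subset\mathcal S$ by iteratively selecting maximal descendants whose $\sigma$-average more than doubles, let $\pi Q$ denote the minimal $\mathcal F$-cube containing $Q$, and set $E_{\mathcal F}(F):=F\setminus\bigcup_{F'\in\mathrm{ch}_{\mathcal F}(F)}F'$. The standard consequences $\langle f\rangle_Q^\sigma\le 2\langle f\rangle_{\pi Q}^\sigma$ and $\sigma(E_{\mathcal F}(F))\ge\sigma(F)/2$ deliver the Carleson embedding $\sum_{F\in\mathcal F}(\langle f\rangle_F^\sigma)^s\sigma(E_{\mathcal F}(F))\lesssim_s\|f\|_{L^s(\sigma)}^s$ for every $s>1$. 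For $\|A_{\mathcal S}^r(\cdot\sigma)\|^r\lesssim\|T\|$, start from $f$ and set $g:=\sum_{F\in\mathcal F}(\langle f\rangle_F^\sigma)^r\mathbf 1_{E_{\mathcal F}(F)}$; Carleson at exponent $s=p$ gives $\|g\|_{L^{p/r}(\sigma)}\lesssim\|f\|_{L^p(\sigma)}^r$, and the a.e.\ monotonicity $g\ge(\langle f\rangle_F^\sigma)^r$ on each $F\in\mathcal F$---because descendants in $\mathcal F$ have larger $\sigma$-averages---yields $\langle g\rangle_Q^\sigma\ge 2^{-r}(\langle f\rangle_Q^\sigma)^r$, hence the pointwise bound $A_{\mathcal S}^r(f\sigma)^r\le 2^r Tg$. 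The reverse direction is symmetric: starting from $g$ with its own $\mathcal F$, set $f:=\sum_{F\in\mathcal F}(\langle g\rangle_F^\sigma)^{1/r}\mathbf 1_{E_{\mathcal F}(F)}$ and apply Carleson at exponent $s=p/r>1$---the unique use of the hypothesis $p>r$---to obtain $\|f\|_{L^p(\sigma)}\lesssim\|g\|_{L^{p/r}(\sigma)}^{1/r}$ and, by the same monotonicity, $Tg\le 2 A_{\mathcal S}^r(f\sigma)^r$ pointwise.

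The main technical point is the pointwise monotonicity $g\ge(\langle f\rangle_F^\sigma)^r$ on $F$ (and the analogous bound for $f$), which is precisely the content of the principal-cube selection: iterated doubling along $\mathcal F$-descendants of $F$ ensures that every value of the constructed function inside $F$ is at least $(\langle f\rangle_F^\sigma)^r$. With the two pointwise comparisons in hand, taking the $L^{p/r}(w)$-norm yields \eqref{eq:char} and taking the $L^{p/r,\infty}(w)$-norm yields \eqref{eq:claim}; neither argument is sensitive to the choice of target space, so both the strong and weak type equivalences fall out of the same construction without additional work.
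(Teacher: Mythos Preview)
Your argument is correct and complete; the pointwise comparisons via principal cubes work as you claim, and the Carleson embedding at exponent $p/r>1$ is indeed the one place where $p>r$ enters, mirroring the paper's use of that hypothesis.

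The paper takes a different route: instead of constructing comparison functions by principal cubes, it exploits the weighted maximal operator. For one direction it uses $(\langle f\rangle_Q^\sigma)^r\le\langle (M_\sigma f)^r\rangle_Q^\sigma$ and then normalizes by $\|M_\sigma f\|_{L^p(\sigma)}\lesssim\|f\|_{L^p(\sigma)}$; for the reverse it uses $\langle f^r\rangle_Q^\sigma\le(\langle M_{\sigma,r}f\rangle_Q^\sigma)^r$ with $M_{\sigma,r}f:=(M_\sigma(f^r))^{1/r}$, bounded on $L^p(\sigma)$ precisely when $p>r$. Both proofs then handle the strong and weak type cases simultaneously by working in a generic target space. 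Your approach is more constructive and yields genuine pointwise inequalities between the two operators applied to explicit functions, which is conceptually pleasant; the paper's is shorter, since the maximal-function substitution is a one-line black box on each side. The two are close cousins---your step function $g=\sum_F(\langle f\rangle_F^\sigma)^r\mathbf 1_{E_{\mathcal F}(F)}$ is essentially a quantized version of $(M_\sigma^{\mathcal S}f)^r$---but the executions are distinct enough to count as different arguments.
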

\begin{proof}
For convenience, denote by $Y^p(w)$ the target space $L^p(w)$ or $L^{p,\infty}(w)$. We have
\begin{align*}
&\|A_{\mathcal S}^r (\cdot \sigma)\|_{L^p(\sigma)\rightarrow Y^p(w)}^r
=\sup_{\|f\|_{L^p(\sigma)=1}}\Big\|\sum_{Q\in \mathcal S} \langle f\sigma\rangle_Q^r \mathbf 1_Q\Big\|_{Y^{\frac pr}(w)}\\
&= \sup_{\|f\|_{L^p(\sigma)=1}}\Big\|\sum_{Q\in \mathcal S} \langle \sigma\rangle_Q^r(\langle f \rangle_Q^\sigma)^r \mathbf 1_Q\Big\|_{Y^{\frac pr}(w)}\\
&\le  \sup_{\|f\|_{L^p(\sigma)=1}}\Big\|\sum_{Q\in \mathcal S} \langle \sigma\rangle_Q^r\langle(M_\sigma (f))^r \rangle_Q^\sigma\mathbf 1_Q\Big\|_{Y^{\frac pr}(w)}\\
&= \sup_{\|f\|_{L^p(\sigma)=1}}\Big\|\sum_{Q\in \mathcal S} \langle \sigma\rangle_Q^r\Big\langle\Big(\frac{M_\sigma (f)}{\|M_\sigma(f)\|_{L^p(\sigma)}}\Big)^r \Big\rangle_Q^\sigma\mathbf 1_Q\Big\|_{Y^{\frac pr}(w)} \|M_\sigma(f)\|_{L^p(\sigma)}^r\\
&\lesssim \sup_{\|g\|_{L^p(\sigma)=1}}\Big\|\sum_{Q\in \mathcal S} \langle \sigma\rangle_Q^r\langle g^r \rangle_Q^\sigma\mathbf 1_Q\Big\|_{Y^{\frac pr}(w)},
\end{align*}
where in the last step, we used the boundedness of $M_{\sigma}$ on $L^p(\sigma)$, and the bound is independent of $\sigma$. For the other direction, notice that
 \[
 \langle f^r\rangle_Q^\sigma \le \inf_{x\in Q} M_\sigma (f^r)(x)=(\inf_{x\in Q} M_{\sigma, r}(f)(x))^{r}\le (\langle M_{\sigma, r}(f)\rangle_Q^\sigma)^r,
 \]
 where $M_{\sigma,r}(f):=(M_\sigma(f^r))^{1/r}$.
With this observation, we have
\begin{align*}
&\sup_{\|f\|_{L^p(\sigma)=1}}\Big\|\sum_{Q\in \mathcal S} \langle \sigma\rangle_Q^r\langle f^r \rangle_Q^\sigma\mathbf 1_Q\Big\|_{Y^{\frac pr}(w)} \\
&\le \sup_{\|f\|_{L^p(\sigma)=1}}\Big\|\sum_{Q\in \mathcal S} \langle \sigma\rangle_Q^r(\langle M_{\sigma,r} f \rangle_Q^\sigma)^r\mathbf 1_Q\Big\|_{Y^{\frac pr}(w)}\\
&\le\sup_{\|f\|_{L^p(\sigma)=1}} \|A_{\mathcal S}^r (\cdot \sigma)\|_{L^p(\sigma)\rightarrow Y^p(w)}^r \|M_{\sigma,r} f\|_{L^p(\sigma)}^r\\
&\lesssim \|A_{\mathcal S}^r (\cdot \sigma)\|_{L^p(\sigma)\rightarrow Y^p(w)}^r,
\end{align*}
where in the last step, we use the boundedness of $M_{\sigma, r}$ on $L^p(\sigma)$ since $p>r$, and the bound is independent of $\sigma$.
This completes the proof of Lemma~\ref{lm:hl}.
\end{proof}

Now suppose that $C_1$ is the best constant such that
\[
\Big\|\sum_{Q\in \mathcal S} \langle \sigma\rangle_Q^r\langle f^r \rangle_Q^\sigma\mathbf 1_Q\Big\|_{Y^{\frac pr}(w)}\le C_1 \|f\|_{L^p(\sigma)}^r,
\]
i.e.,
\begin{equation}\label{eq:c1}
\Big\|\sum_{Q\in \mathcal S} \langle \sigma\rangle_Q^r\langle f \rangle_Q^\sigma\mathbf 1_Q\Big\|_{Y^{\frac pr}(w)}\le C_1 \|f\|_{L^{\frac pr}(\sigma)},
\end{equation}
Then
\[
\|A_{\mathcal S}^r (\cdot \sigma)\|_{L^p(\sigma)\rightarrow Y^p(w)} \simeq C_1^{\frac 1r}.
\]
Hence, we have reduced the problem to study \eqref{eq:c1}. We need the following result given by Lacey, Sawyer and Uriarte-Tuero \cite{LSU}.
\begin{Proposition}\label{prop:char}
Let $\tau=\{\tau_Q: Q\in\mathcal Q\}$ be non-negative constants, $w,\sigma$ be weights and $T$ is the linear operator defined by
\[
T_\tau(f):= \sum_{Q\in\mathcal Q} \tau_Q \langle f\rangle_Q \mathbf 1_Q.
\]
Then for $1<p<\infty$, there holds
\begin{eqnarray*}
\|T_\tau (\cdot \sigma) \|_{L^p(\sigma)\rightarrow L^{p,\infty}(w)}& \simeq& \sup_{R\in\mathcal Q} w(R)^{-\frac 1{p'}} \Big\| \sum_{\substack{Q\in\mathcal Q\\ Q\subset R}} \tau_Q \langle w\rangle_Q \mathbf 1_Q           \Big\|_{L^{p'}(\sigma)}\\
\|T_\tau (\cdot \sigma) \|_{L^p(\sigma)\rightarrow L^p(w)}& \simeq& \sup_{R\in\mathcal Q} w(R)^{-\frac 1{p'}} \Big\| \sum_{\substack{Q\in\mathcal Q\\ Q\subset R}} \tau_Q \langle w\rangle_Q \mathbf 1_Q           \Big\|_{L^{p'}(\sigma)}\\
&&\quad+\sup_{R\in \mathcal Q} \sigma(R)^{-\frac 1p} \Big\| \sum_{\substack{Q\in\mathcal Q\\  Q\subset R}} \tau_Q \langle \sigma\rangle_Q \mathbf 1_Q           \Big\|_{L^p(w)}.
\end{eqnarray*}
\end{Proposition}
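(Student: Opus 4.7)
The plan is to prove both equivalences in parallel, each in two steps: necessity by testing with $\mathbf 1_R$, and sufficiency by a parallel-corona decomposition applied to the symmetric bilinear form
\[
\langle T_\tau(f\sigma),gw\rangle=\sum_{Q\in\mathcal Q}\tau_Q\langle f\sigma\rangle_Q\langle gw\rangle_Q|Q|.
\]
Necessity is immediate: positivity of $T_\tau$ gives $T_\tau(\mathbf 1_R\sigma)\ge\sum_{Q\subset R}\tau_Q\langle\sigma\rangle_Q\mathbf 1_Q$, which when tested in $L^p(\sigma)\to L^p(w)$ yields the $\sigma$-testing constant, and the $g=\mathbf 1_R$ dual yields the $w$-testing constant. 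For the weak type, combine the bound $\int_R h\,dw\le w(R)^{1/p'}\|h\|_{L^{p,\infty}(w)}$ (i.e.\ $\|\mathbf 1_R\|_{L^{p',1}(w)}\simeq w(R)^{1/p'}$) applied to $h=T_\tau(f\sigma)$ with $\int_R T_\tau(f\sigma)\,dw\ge\sum_{Q\subset R}\tau_Q\langle w\rangle_Q\langle f\sigma\rangle_Q|Q|$ and duality in $f\in L^p(\sigma)$.

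For the strong-type sufficiency I would run the standard parallel-corona argument: build principal-cube families $\mathcal F$ for $f$ with respect to $\sigma$ and $\mathcal G$ for $g$ with respect to $w$, so that $\langle f\sigma\rangle_Q\lesssim\langle f\rangle_{\pi_{\mathcal F}(Q)}^\sigma\langle\sigma\rangle_Q$ and $\langle gw\rangle_Q\lesssim\langle g\rangle_{\pi_{\mathcal G}(Q)}^w\langle w\rangle_Q$ on the respective coronas. Splitting the bilinear sum by the pair $(F,G)=(\pi_{\mathcal F}(Q),\pi_{\mathcal G}(Q))$, organize the contributions into chains $F\subset G$ and $G\subset F$. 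On the chain $F\subset G$ the inner sum over $Q\subset F$ collapses by H\"older and the $\sigma$-testing condition to $\mathcal T\,\sigma(F)^{1/p}\|g\mathbf 1_F\|_{L^{p'}(w)}$, and the outer sum over $F$ is controlled by the Carleson property of the $\mathcal F$-stopping times. The chain $G\subset F$ is symmetric and uses the $w$-testing constant.

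For the weak-type sufficiency, I would first use the equivalence $\|h\|_{L^{p,\infty}(w)}\simeq\sup_E w(E)^{-1/p'}\int_E h\,dw$ for $h\ge 0$, combined with the adjoint identity $\int_E T_\tau(f\sigma)\,dw=\int f\,T_\tau(\mathbf 1_E w)\,d\sigma$, to reduce matters to
\[
\sup_E w(E)^{-1/p'}\|T_\tau(\mathbf 1_E w)\|_{L^{p'}(\sigma)}\lesssim\sup_R w(R)^{-1/p'}\Big\|\sum_{Q\subset R}\tau_Q\langle w\rangle_Q\mathbf 1_Q\Big\|_{L^{p'}(\sigma)}.
\]
Now apply a single-sided principal-cube decomposition, this time to $\mathbf 1_E$ with respect to $w$: on each corona $\langle w\mathbf 1_E\rangle_Q\lesssim\langle\mathbf 1_E\rangle_{\pi(Q)}^w\langle w\rangle_Q$, and a duality argument collapses the inner sum via the cube-testing constant, yielding
\[
\|T_\tau(\mathbf 1_E w)\|_{L^{p'}(\sigma)}\lesssim (\text{RHS constant})\Bigl(\sum_{F\in\mathcal F}\bigl(\langle\mathbf 1_E\rangle_F^w\bigr)^{p'}w(F)\Bigr)^{1/p'},
\]
after which the Carleson embedding $\sum_F(\langle\mathbf 1_E\rangle_F^w)^{p'}w(F)\lesssim w(E)$ closes the estimate.

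The main obstacle is the passage from arbitrary measurable sets $E$ to dyadic cubes in the weak-type sufficiency: only the single testing constant is available, with no $\sigma$-side testing to invoke. This forces the single-corona argument above, which works precisely because the resulting Carleson sum telescopes exactly to $w(E)$, with no room to spare.
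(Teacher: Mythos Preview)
The paper does not prove this proposition; it is quoted as a black-box result of Lacey--Sawyer--Uriarte-Tuero, so there is no argument in the paper to compare against. Your necessity arguments are correct, and the parallel-corona sketch for the strong type is the standard one and can be made to work (though ``the outer sum over $F$ is controlled by the Carleson property'' hides the usual care needed with the $g$-factor).

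The genuine gap is in the weak-type sufficiency, precisely at the step you yourself flag as the main obstacle. After the single corona on $\mathbf 1_E$ with respect to $w$ you have
\[
T_\tau(\mathbf 1_E w)\lesssim\sum_{F\in\mathcal F}\langle\mathbf 1_E\rangle_F^w\,h_F,\qquad h_F:=\sum_{\pi(Q)=F}\tau_Q\langle w\rangle_Q\mathbf 1_Q,\quad \|h_F\|_{L^{p'}(\sigma)}\le\mathcal T^*\,w(F)^{1/p'}.
\]
Your ``duality argument'' must pair with some $g\in L^p(\sigma)$; applying the testing bound on each $F$ and then H\"older over $F$ produces the desired Carleson factor $\bigl(\sum_F(\langle\mathbf 1_E\rangle_F^w)^{p'}w(F)\bigr)^{1/p'}$ multiplied by $\bigl(\sum_F\|g\mathbf 1_F\|_{L^p(\sigma)}^p\bigr)^{1/p}$. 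But the principal cubes $F$ are \emph{nested}, not disjoint, so this last sum is not controlled by $\|g\|_{L^p(\sigma)}^p$: a single point can lie in arbitrarily many $F$'s, and the Carleson packing of $\mathcal F$ is a condition on $w$-masses, saying nothing about $\sigma$-integrals of an arbitrary $g$. In the strong-type parallel corona this overlap is absorbed precisely by invoking the \emph{other} testing constant on the symmetric half, which is unavailable here. The actual proof of the weak-type characterization (in the cited reference and later simplifications) needs additional structure---typically a further decomposition of $E$ into maximal dyadic cubes with separate treatment of the ``inner'' and ``outer'' contributions, or a stopping-time construction tied to level sets of the operator---and does not reduce to a single corona plus Carleson embedding as written.
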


Observing that
\begin{equation*}
  LHS\eqref{eq:c1}
  =\|T_\tau(f\sigma)\|_{Y^{\frac pr}(w)}
\end{equation*}
with $\tau_Q=\ave{\sigma}_Q^{r-1}$,
Theorem~\ref{thm:m1} follows immediately from Proposition~\ref{prop:char}.


\subsection{The case $1<p\le r$} In this case, making use of the usual construction principal cubes $\mathcal F$ of $(f,\sigma)$, we have
\begin{align*}
\|A_{\mathcal S}^r (f\sigma)\|_{L^p(\sigma)\rightarrow L^p(w)}&= \Big\|  \Big( \sum_{Q\in\mathcal S} \langle f\sigma\rangle_Q^r        \mathbf 1_Q    \Big)^{\frac 1r}  \Big \|_{L^p(w)}\\
&\lesssim  \Big\|  \Big(\sum_{F\in \mathcal F}(\langle f\rangle_F^\sigma)^r \sum_{\substack{Q\in\mathcal S\\ \pi(Q)=F}} \langle \sigma\rangle_Q^r        \mathbf 1_Q    \Big)^{\frac 1r}  \Big \|_{L^p(w)}\\
&\le  \Big(    \sum_{F\in \mathcal F}(\langle f\rangle_F^\sigma)^p\Big\|  \Big(\sum_{\substack{Q\in\mathcal S\\ \pi(Q)=F}} \langle \sigma\rangle_Q^r        \mathbf 1_Q    \Big)^{\frac 1r}     \Big\|_{L^p(w)}^p                          \Big)^{\frac 1p}\\
&\le   \Big(    \sum_{F\in \mathcal F}(\langle f\rangle_F^\sigma)^p \mathcal T^{\frac pr }\sigma (F)     \Big)^{\frac 1p}
\lesssim \mathcal T^{\frac 1r}\|f\|_{L^p(\sigma)}
\end{align*}
On the other hand, it is obvious that
\[
\mathcal T^{\frac 1r}\le \|A_{\mathcal S}^r (\cdot \sigma)\|_{L^p(\sigma)\rightarrow L^p(w)}.
\]
Therefore, $\|A_{\mathcal S}^r (\cdot \sigma)\|_{L^p(\sigma)\rightarrow L^p(w)}\simeq \mathcal T^{\frac 1r}$.

\section{Proof of Proposition~\ref{prop:test}}

We recall the following proposition.

\begin{Proposition}[\cite{cov2004}, Proposition~2.2]\label{prop:dyadicSum}
Let $1<s<\infty$, $\sigma$ be a positive Borel measure and
\[
\phi=\sum_{Q\in\mathcal D} \alpha_Q \mathbf 1_Q,\qquad \phi_Q=\sum_{Q'\subset Q}\alpha_{Q'} \mathbf 1_{Q'}.
\]
Then
\[
\|\phi\|_{L^s(\sigma)}\eqsim \Big( \sum_{Q\in \mathcal D} \alpha_Q (\langle\phi_Q\rangle_Q^\sigma)^{s-1}\sigma(Q) \Big)^{1/s}.
\]
\end{Proposition}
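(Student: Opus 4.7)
The plan is to combine a pointwise telescoping identity for $\phi^s$ with Jensen's inequality and a principal-cube stopping time on the averages $\beta_Q := \langle \phi_Q\rangle_Q^\sigma$. For each $x$, enumerate the cubes of $\mathcal{D}$ containing $x$ as $Q_1(x) \subsetneq Q_2(x) \subsetneq \cdots$ and set $\Phi_k(x) := \sum_{j \le k} \alpha_{Q_j(x)} = \phi_{Q_k(x)}(x)$, so that $\phi(x) = \lim_{k} \Phi_k(x)$. Applying the elementary two-sided estimate $(a+b)^s - a^s \eqsim b(a+b)^{s-1}$ (valid for $a,b \ge 0$ and $s > 1$) to the telescoping series $\phi^s = \sum_k (\Phi_k^s - \Phi_{k-1}^s)$, I obtain the pointwise equivalence $\phi(x)^s \eqsim \sum_{Q \ni x} \alpha_Q \phi_Q(x)^{s-1}$, and integrating against $\sigma$ reduces the claim to
\[
\sum_Q \alpha_Q \int_Q \phi_Q^{s-1} \, d\sigma \eqsim \sum_Q \alpha_Q \sigma(Q) \beta_Q^{s-1}.
\]

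One direction of this reduced equivalence follows from Jensen's inequality applied to $t \mapsto t^{s-1}$: concavity gives $\int_Q \phi_Q^{s-1}\,d\sigma \le \sigma(Q) \beta_Q^{s-1}$ when $1 < s \le 2$, and convexity gives the reverse when $s \ge 2$. The complementary side is handled by a principal-cube stopping time for $\beta_Q$: let $\mathcal{F}$ consist of the top cubes of $\mathcal{D}$ together with all maximal $F' \subsetneq F$ (for $F \in \mathcal{F}$) such that $\beta_{F'} > 2\beta_F$. The monotonicity $\phi_{F'} \le \phi_F$ on $F' \subseteq F$ forces the sparsity $\sum_{F' \in \mathrm{ch}_\mathcal{F}(F)} \sigma(F') \le \tfrac12 \sigma(F)$; writing $\pi(Q) = F$ for the minimal $\mathcal{F}$-ancestor of $Q$, the stopping rule gives $\beta_Q \le 2\beta_F$ on the stratum $\pi^{-1}(F)$, so $\sum_Q \alpha_Q \sigma(Q) \beta_Q^{s-1} \lesssim \sum_F \beta_F^{s-1} T_F$ with $T_F := \sum_{\pi(Q)=F} \alpha_Q \sigma(Q) \le \sigma(F)\beta_F$. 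A further telescoping in $\mathcal{F}$ using the gap $\beta_{F'} > 2\beta_F$ collapses this to $\sum_F \sigma(F) \beta_F^s$; by sparseness this equals $\int g^s\,d\sigma$ for $g := \sum_F \beta_F \mathbf{1}_{E_F}$ with $E_F := F \setminus \bigcup \mathrm{ch}_\mathcal{F}(F)$, and since $g \le M_\sigma\phi$ pointwise, the dyadic $\sigma$-maximal inequality bounds it by $\|\phi\|_{L^s(\sigma)}^s$.

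The main obstacle is the convex regime $s \ge 2$, in which Jensen and the stopping-time chain both produce the same inequality $\sum_Q \alpha_Q \sigma(Q) \beta_Q^{s-1} \lesssim \|\phi\|_{L^s(\sigma)}^s$, leaving the reverse $\|\phi\|_{L^s(\sigma)}^s \lesssim \sum_Q \alpha_Q \sigma(Q) \beta_Q^{s-1}$ outstanding. I would close this by iterating the pointwise telescoping: since $s-1 > 1$, the same identity applies to $\phi_Q^{s-1}$, replacing $\int_Q \phi_Q^{s-1}\,d\sigma \eqsim \sum_{Q' \subseteq Q} \alpha_{Q'} \int_{Q'} \phi_{Q'}^{s-2}\,d\sigma$ and lowering the exponent; after finitely many iterations, one reaches an exponent in the concave range where Jensen supplies the missing direction, and a bookkeeping interchange of summations (aided by the stopping-time factorization) recovers the target sum $\sum_{Q} \alpha_Q \sigma(Q) \beta_Q^{s-1}$. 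The delicate part is tracking constants through this iteration and verifying that the double sums collapse correctly against the sparse stopping layers.
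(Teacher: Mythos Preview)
The paper does not prove this proposition; it is quoted from \cite{cov2004} and used as a black box. Your telescoping reduction $\|\phi\|_{L^s(\sigma)}^s \eqsim \sum_Q \alpha_Q \int_Q \phi_Q^{s-1}\,d\sigma$ together with the Jensen/stopping-time combination is exactly the Cascante--Ortega--Verbitsky strategy, and your treatment of the range $1<s\le 2$ is complete and correct (the sparseness of your family $\mathcal F$ follows just as you indicate, from $\phi_{F'}\le\phi_F$ on $F'$ combined with the stopping inequality $\beta_{F'}>2\beta_F$).

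The gap you flag for $s>2$ is real, however, and your iteration sketch does not close it. Carrying out one step (say for $2<s\le 3$) gives
\[
\|\phi\|_{L^s(\sigma)}^s \lesssim \sum_{Q} \alpha_Q \sum_{Q'\subset Q} \alpha_{Q'}\,\sigma(Q')\,\beta_{Q'}^{\,s-2},
\]
and interchanging the sums produces $\sum_{Q'}\alpha_{Q'}\sigma(Q')\beta_{Q'}^{\,s-2}\bigl(\alpha_{Q'}+\psi_{Q'}\bigr)$ with the tail $\psi_{Q'}:=\sum_{Q\supsetneq Q'}\alpha_Q$. To reach the target $\sum_{Q'}\alpha_{Q'}\sigma(Q')\beta_{Q'}^{\,s-1}$ you would need $\psi_{Q'}\lesssim\beta_{Q'}$, which is false in general: $\psi_{Q'}$ depends only on the coefficients of the \emph{ancestors} of $Q'$, while $\beta_{Q'}=\langle\phi_{Q'}\rangle_{Q'}^\sigma$ sees only the \emph{descendants}. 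Overlaying your $\beta$-stopping family does not rescue this, since the stopping condition $\beta_{F'}>2\beta_F$ again controls descendant averages, not tails. So the ``bookkeeping interchange of summations'' you invoke is not a routine step, and a different idea is needed to obtain the upper bound $\|\phi\|_{L^s(\sigma)}^s\lesssim\sum_Q\alpha_Q\sigma(Q)\beta_Q^{\,s-1}$ when $s>2$.
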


We also need the following result, whose proof is based on the Kolmogorov's inequality.

\begin{Proposition}\cite[Lemma 5.2]{Hytonen}
Let $\gamma \in [0, 1)$. Then $$\sum_{\substack{Q\in \mathcal S\\ Q\subset R}} \langle w\rangle_Q^\gamma |Q|\lesssim \langle w\rangle_R^\gamma |R|.$$
\end{Proposition}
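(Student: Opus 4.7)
The plan is to exploit the sparseness of $\mathcal S$ to pass from the sum over cubes to an integral, and then to bound the resulting integral by Kolmogorov's inequality applied to the dyadic maximal function. Since $\mathcal S$ is sparse, the sets $E(Q)\subset Q$ are pairwise disjoint with $|Q|\leq 2|E(Q)|$, so the first step will be the replacement
\begin{equation*}
  \sum_{\substack{Q\in\mathcal S\\ Q\subset R}}\ave{w}_Q^\gamma|Q|
  \leq 2\sum_{\substack{Q\in\mathcal S\\ Q\subset R}}\ave{w}_Q^\gamma|E(Q)|.
\end{equation*}

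Next, I would introduce the dyadic maximal function relative to $R$, namely $M_R^d w(x):=\sup_{x\in Q\subset R}\ave{w}_Q$, where the supremum is over dyadic cubes. For $x\in E(Q)\subset Q\subset R$ we clearly have $\ave{w}_Q\leq M_R^d w(x)$, so pointwise $\ave{w}_Q^\gamma\mathbf 1_{E(Q)}(x)\leq (M_R^d w(x))^\gamma\mathbf 1_{E(Q)}(x)$. Summing over $Q\subset R$ and using the pairwise disjointness of the $E(Q)$'s inside $R$ gives
\begin{equation*}
  \sum_{\substack{Q\in\mathcal S\\ Q\subset R}}\ave{w}_Q^\gamma|E(Q)|
  \leq \int_R (M_R^d w)^\gamma.
\end{equation*}

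The final step is Kolmogorov's inequality. Since $M_R^d$ is of weak type $(1,1)$ with constant independent of $R$ (in fact, bounded by $1$, as the dyadic maximal operator is $L^1\to L^{1,\infty}$ with norm $1$), and since $\gamma\in[0,1)$, Kolmogorov's inequality yields
\begin{equation*}
  \int_R (M_R^d w)^\gamma
  \lesssim |R|^{1-\gamma}\,\|w\mathbf 1_R\|_{L^1}^\gamma
  = |R|^{1-\gamma}w(R)^\gamma
  = \ave{w}_R^\gamma|R|,
\end{equation*}
with an implicit constant depending only on $\gamma$ (specifically, of order $(1-\gamma)^{-1}$). Combining the three displays delivers the claim.

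The only mild subtlety is the application of Kolmogorov's inequality with the correct normalization on the cube $R$; once one notices that the relevant maximal operator may be localized to $R$ (only dyadic cubes inside $R$ enter), the weak-$(1,1)$ bound with input $w\mathbf 1_R$ is immediate, and no further care is needed. There is no serious obstacle — the argument is essentially a standard sparse-plus-Kolmogorov computation, and the main point to keep track of is simply that the constant blows up as $\gamma\uparrow 1$, which is the reason for excluding $\gamma=1$.
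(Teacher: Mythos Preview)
Your argument is correct and matches the approach indicated in the paper, which does not give a full proof but notes that it ``is based on the Kolmogorov's inequality.'' The sparse-to-integral reduction via the disjoint sets $E(Q)$ followed by Kolmogorov applied to the localized dyadic maximal function is exactly the intended route.
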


Now we can estimate the two testing constants.

\subsection{Estimate of $\mathcal T$}

Let us first note that the case $p\ge r+1$ implies the general case. Indeed, suppose the mentioned case is already proven, and consider $p< r+1$. Let $\mathcal T_r$ denote the testing constant related to a given value of $r$. Now in particular $r>p-1$, and hence
\begin{equation*}
\begin{split}
  \mathcal T_r^{1/r}
  &=\sup_{R\in\mathcal S}\sigma(R)^{-\frac 1p}\Big\|\Big(\sum_{\substack{Q\in\mathcal S\\ Q\subset R}}\ave{\sigma}_Q^r 1_Q\Big)^{\frac 1r}\Big\|_{L^p(w)}  \\
  &\leq\sup_{R\in\mathcal S}\sigma(R)^{-\frac 1p}\Big\|\Big(\sum_{\substack{Q\in\mathcal S\\ Q\subset R}}\ave{\sigma}_Q^{p-1} 1_Q\Big)^{\frac 1{p-1}}\Big\|_{L^p(w)}
  =  (\mathcal T_{p-1})^{\frac 1{p-1}}.
\end{split}
\end{equation*}
Since obviously $p\ge (p-1)+1$, we know by assumption that $\mathcal (\mathcal T_{p-1})^{\frac 1{p-1}}\leq[w,\sigma]_{A_p}^{\frac 1p}[\sigma]_{A_\infty}^{\frac 1p}$, and this gives the required bound for $\mathcal T_r$.

So we concentrate on $p\ge r+1$. By Proposition~\ref{prop:dyadicSum},
we have
\begin{align*}
&\Big\| \sum_{\substack{Q\in\mathcal S\\ Q\subset R}} \langle \sigma\rangle_Q^r \mathbf 1_Q           \Big\|_{L^{\frac pr}(w)} \\
  &\eqsim \Big(\sum_{\substack{Q\in \mathcal S\\ Q\subset R}}  \langle \sigma\rangle_Q^r w(Q) \Big(\frac 1{w(Q)} \sum_{\substack{Q'\in \mathcal S \\Q'\subset Q}}      \langle \sigma\rangle_{Q'}^r w(Q')                                   \Big)^{\frac pr -1}        \Big)^{\frac rp}\\
&=\Big(\sum_{\substack{Q\in \mathcal S\\ Q\subset R}}  \langle \sigma\rangle_Q^r w(Q) \Big(\frac 1{w(Q)} \sum_{\substack{Q'\in \mathcal S \\Q'\subset Q}}      \langle \sigma\rangle_{Q'}^r \langle w \rangle_{Q'}^{\frac r{p-1}}       \langle w \rangle_{Q'}^{1-\frac r{p-1}}             |Q'|       \Big)^{\frac pr -1}        \Big)^{\frac rp}\\
&\le [w,\sigma]_{A_p}^{\frac r{p-1}(1-\frac rp)}\Big(\sum_{\substack{Q\in \mathcal S\\ Q\subset R}}  \langle \sigma\rangle_Q^r w(Q) \Big(\frac 1{w(Q)} \sum_{\substack{Q'\in \mathcal S \\Q'\subset Q}}       \langle w \rangle_{Q'}^{1-\frac r{p-1}}             |Q'|       \Big)^{\frac pr -1}        \Big)^{\frac rp}\\
&\lesssim [w,\sigma]_{A_p}^{\frac r{p-1}(1-\frac rp)}\Big(\sum_{\substack{Q\in \mathcal S\\ Q\subset R}}  \langle \sigma\rangle_Q^r w(Q) \Big(\frac 1{w(Q)}       \langle w \rangle_{Q}^{1-\frac r{p-1}}             |Q|       \Big)^{\frac pr -1}        \Big)^{\frac rp}\\
&= [w,\sigma]_{A_p}^{\frac r{p-1}(1-\frac rp)}\Big(\sum_{\substack{Q\in \mathcal S\\ Q\subset R}}  \langle \sigma\rangle_Q^r \langle w\rangle_Q^{\frac{r-1}{p-1}} |Q|        \Big)^{\frac rp}\\
&\le [w,\sigma]_{A_p}^{\frac r{p-1}(1-\frac rp)+\frac{r-1}{p-1}\cdot\frac rp}\Big( \sum_{\substack{Q\in \mathcal S\\ Q\subset R}}  \langle \sigma\rangle_Q |Q|\Big)^{\frac rp}
\lesssim [w,\sigma]_{A_p}^{\frac rp} [\sigma]_{A_\infty}^{\frac rp} \sigma(R)^{\frac rp}.
\end{align*}
Therefore,
\begin{equation}\label{eq:1}
\mathcal T\lesssim [w,\sigma]_{A_p}^{\frac rp} [\sigma]_{A_\infty}^{\frac rp}.
\end{equation}

\subsection{Estimate of $\mathcal T^*$}

Recall that we only consider $p>r$ in this case. For simplicity, we denote $s=(\frac pr)'$. By Proposition~\ref{prop:dyadicSum} again, we have
\begin{equation}\label{eq:s}
\begin{split}
&\Big\| \sum_{\substack{Q\in\mathcal S\\ Q\subset R}} \langle \sigma\rangle_Q^{r-1} \langle w\rangle_Q \mathbf 1_Q           \Big\|_{L^{(\frac pr)'}(\sigma)}\\
&\eqsim \Big( \sum_{\substack{Q\in\mathcal S\\ Q\subset R}}  \langle \sigma\rangle_Q^{r-1} \langle w\rangle_Q\Big(\frac 1{\sigma(Q)} \sum_{\substack{Q'\in \mathcal S \\Q'\subset Q}} \langle \sigma\rangle_{Q'}^{r-1} \langle w\rangle_{Q'} \sigma(Q')\Big)^{s-1}\sigma(Q) \Big)^{1/s}
\end{split}
\end{equation}
We consider $r<p<r+1$ and $p>r+1$ separately. If $r<p<r+1$, then
\begin{align*}
&RHS \eqref{eq:s}\\
&= \Big( \sum_{\substack{Q\in\mathcal S\\ Q\subset R}}  \langle \sigma\rangle_Q^{r-1} \langle w\rangle_Q\Big(\frac 1{\sigma(Q)} \sum_{\substack{Q'\in \mathcal S \\Q'\subset Q}} \langle \sigma\rangle_{Q'}^{p-1} \langle w\rangle_{Q'} \langle\sigma\rangle_{Q'}^{r+1-p} |Q'|\Big)^{s-1}\sigma(Q) \Big)^{1/s}\\
&\le [w,\sigma]_{A_p}^{\frac {s-1}s} \Big( \sum_{\substack{Q\in\mathcal S\\ Q\subset R}}  \langle \sigma\rangle_Q^{r-1} \langle w\rangle_Q\Big(\frac 1{\sigma(Q)} \sum_{\substack{Q'\in \mathcal S \\Q'\subset Q}} \langle\sigma\rangle_{Q'}^{r+1-p} |Q'|\Big)^{s-1}\sigma(Q) \Big)^{1/s}\\
&\lesssim [w,\sigma]_{A_p}^{\frac {s-1}s} \Big( \sum_{\substack{Q\in\mathcal S\\ Q\subset R}}  \langle \sigma\rangle_Q^{r-1} \langle w\rangle_Q\Big(\frac 1{\sigma(Q)}  \langle\sigma\rangle_{Q}^{r+1-p} |Q|\Big)^{s-1}\sigma(Q) \Big)^{1/s}\\
&= [w,\sigma]_{A_p}^{\frac rp}\Big(\sum_{\substack{Q\in\mathcal S\\ Q\subset R}} \langle w\rangle_Q |Q|\Big)^{1/s}
\lesssim [w,\sigma]_{A_p}^{\frac rp}[w]_{A_\infty}^{1-\frac rp}w(R)^{1/(\frac pr)'}.
\end{align*}
If $p\ge r+1$, then
\begin{align*}
&RHS \eqref{eq:s}\\
&= \Big( \sum_{\substack{Q\in\mathcal S\\ Q\subset R}}  \langle \sigma\rangle_Q^{r-1} \langle w\rangle_Q\Big(\frac 1{\sigma(Q)} \sum_{\substack{Q'\in \mathcal S \\Q'\subset Q}} \langle \sigma\rangle_{Q'}^{r} \langle w\rangle_{Q'}^{\frac {r}{p-1}} \langle w\rangle_{Q'}^{1-\frac r{p-1}}|Q'|\Big)^{s-1}\sigma(Q) \Big)^{1/s}\\
&\le [w,\sigma]_{A_p}^{\frac {r^2}{(p-1)p}} \Big( \sum_{\substack{Q\in\mathcal S\\ Q\subset R}}  \langle \sigma\rangle_Q^{r-1} \langle w\rangle_Q\Big(\frac 1{\sigma(Q)} \sum_{\substack{Q'\in \mathcal S \\Q'\subset Q}}  \langle w\rangle_{Q'}^{1-\frac r{p-1}}|Q'|\Big)^{s-1}\sigma(Q) \Big)^{1/s}\\
&\le  [w,\sigma]_{A_p}^{\frac {r^2}{(p-1)p}} \Big( \sum_{\substack{Q\in\mathcal S\\ Q\subset R}}  \langle \sigma\rangle_Q^{r-1} \langle w\rangle_Q\Big(\frac 1{\sigma(Q)}  \langle w\rangle_{Q}^{1-\frac r{p-1}}|Q|\Big)^{s-1}\sigma(Q) \Big)^{1/s}\\
&= [w,\sigma]_{A_p}^{\frac {r^2}{(p-1)p}}\Big(\sum_{\substack{Q\in\mathcal S\\ Q\subset R}} \langle w \rangle_Q^{1+ \frac {(p-1-r)r}{(p-1)(p-r)}} \langle\sigma \rangle_Q^{\frac {(p-1-r)r}{p-r}}|Q|\Big)^{1/s}\\
&\le [w,\sigma]_{A_p}^{\frac {r^2}{(p-1)p}+\frac{(p-1-r)r}{p(p-1)}} \Big(\sum_{\substack{Q\in\mathcal S\\ Q\subset R}} \langle w \rangle_Q   |Q|\Big)^{1/s}
\lesssim [w,\sigma]_{A_p}^{\frac rp} [w]_{A_\infty}^{1-\frac rp} w(R)^{1/(\frac pr)'}.
\end{align*}
Therefore, in both cases,
\begin{equation}\label{eq:2}
\mathcal T^*\lesssim [w,\sigma]_{A_p}^{\frac rp} [w]_{A_\infty}^{1-\frac rp} .
\end{equation}

Combining  \eqref{eq:1} and \eqref{eq:2}, we have completed the proof of Proposition~\ref{prop:test}. Together with Theorem~\ref{thm:m1}, this yields Theorem~\ref{thm:m} as well as Theorem~\ref{thm:main} in the case that $p>r$.


\section{Proof of the weak type bound for $1<p<r$}\label{sec:weak}

We are left to prove Theorem~\ref{thm:main} in the case that $1<p<r$. Actually, Lacey and Scurry \cite{LS2012} have investigated the one-weight case. Following their method, it is easy to give the two-weight estimate as well. For completeness, we give the details. We want to bound the following inequality,
\[
\sup_{t>0} t w(\{x\in \bbR^n: A_{\mathcal S}^r (f\sigma)>t \})^{\frac 1p}\le C \|f\|_{L^p(\sigma)}.
\]
By scaling it suffices to give an uniform estimate for
\[
t_0 w(\{x\in \bbR^n: A_{\mathcal S}^r (f\sigma)>t_0 \})^{\frac 1p},
\]where $t_0$ is some constant to be determined later.
It is also free to further sparsify $\mathcal S$ such that
\[
\Big|\bigcup_{\substack{Q'\subsetneq Q\\ Q',Q\in \mathcal S} }Q'\Big| \le \frac 14 |Q|.
\]
Now set
\[
\mathcal S_l:=\{Q\in \mathcal S: 2^{-l-1 }< \langle f\sigma\rangle_Q\le 2^{-l} \},\qquad l\ge 0,
\]  and
\[
\mathcal S_{-1}:=\{Q\in \mathcal S: \langle f\sigma\rangle_Q> 1\}.
\]
Then for $Q\in \mathcal S_l$, $l\ge 0$, denote by $\ch_{\mathcal S_l}(Q)$ the maximal subcubes of $Q$ in $\mathcal S_l$ and $E_Q=Q\setminus (\cup_{Q'\in
\ch_{\mathcal S_l}(Q)} Q')$. We have
\begin{align*}
\langle f\sigma \mathbf 1_{E_Q}\rangle_Q &= \frac 1{|Q|} \int_{Q} f\sigma dx -\frac 1{|Q|}\sum_{   Q'\in \ch_{\mathcal S_l}(Q) } \int_{Q'} f\sigma dx\\
&= \frac 1{|Q|} \int_{Q} f\sigma dx -\sum_{  Q'\in \ch_{\mathcal S_l}(Q) }\frac {|Q'|}{|Q|} \frac {1}{|Q'|} \int_{Q'} f\sigma dx\\
&\ge \frac 1{|Q|} \int_{Q} f\sigma dx-\frac 14 2^{-l}
\ge  \frac 12 \langle f\sigma\rangle_Q.
\end{align*}
Since
\begin{align*}
&w(\{x\in \bbR^n: A_{\mathcal S}^r (f\sigma)>t_0 \}) \\
&= w(\{x\in \bbR^n:  \sum_{Q\in \mathcal S} \langle f\sigma\rangle_Q^r \mathbf 1_Q>t_0^r \}) \\
&\le  w(\{x\in \bbR^n:  \sum_{l\ge 0}\sum_{Q\in \mathcal S_l} \langle f\sigma\rangle_Q^r \mathbf 1_Q>\frac {t_0^r}2  \}) \\
&\qquad + w(\{x\in \bbR^n:  \sum_{Q\in \mathcal S_{-1}} \langle f\sigma\rangle_Q^r \mathbf 1_Q>\frac {t_0^r}2   \})
=:I_1+I_2.
\end{align*}
It is easy to see that
\begin{align*}
I_2\le w(\cup_{S\in \mathcal S_{-1}} S)\le w(\{M(f\sigma) > 1\})\lesssim [w,\sigma]_{A_p} \|f\|_{L^p(\sigma)}^p.
\end{align*}
Let $\frac {t_0^r}2= \sum_{l\ge 0} 2^{-\epsilon l} $, where $\epsilon:= (r-p)/2$. We have
\begin{align*}
I_1&\le \sum_{l\ge 0} w(\{x\in \bbR^n:\sum_{Q\in \mathcal S_l} \langle f\sigma\rangle_Q^r \mathbf 1_Q> 2^{-\epsilon l} \})\\
&\le  \sum_{l\ge 0 }w(\{x\in \bbR^n:\sum_{Q\in \mathcal S_l} \langle f\sigma\rangle_Q^p   \mathbf 1_Q> 2^{(r-p)l}2^{-\epsilon l} \})\\
&\le \sum_{l\ge 0 }w(\{x\in \bbR^n:\sum_{Q\in \mathcal S_l} \langle f\sigma\mathbf 1_{E_Q}\rangle_Q^p   \mathbf 1_Q> 2^{-p}2^{(r-p)l}2^{-\epsilon l} \})\\
&\le \sum_{l\ge 0 } 2^{(p+\epsilon-r)l+p}\int_{\bbR^n } \sum_{Q\in \mathcal S_l} \langle f\sigma\mathbf 1_{E_Q}\rangle_Q^p   \mathbf 1_Q \dw\\
&\le  \sum_{l\ge 0 } 2^{(p+\epsilon-r)l+p} \sum_{Q\in \mathcal S_l}\frac {w(Q)}{|Q|^p} \sigma(Q)^{p-1}\int_{E_Q} f^p \textup{d}\sigma \\
&\lesssim [w,\sigma]_{A_p} \|f\|_{L^p(\sigma)}^p.
\end{align*}
Combining the above, we get
\[
\|A_{\mathcal S}^r (f\sigma)\|_{L^{p,\infty}(w)}\lesssim [w,\sigma]_{A_p}^{\frac 1p} \|f\|_{L^p(\sigma)}.
\]

\section{Sharpness of the weak type bounds}

In this section, let
\begin{equation*}
  Sf:=\Big(\sum_{I\in\mathcal D}\frac{1_I}{\abs{I}}\abs{\langle h_I,f\rangle}^2\Big)^{1/2}
\end{equation*}
denote the Haar square function, and $\sigma:=w^{1-p'}$ will always be the $L^p$-dual weight of $w$ for a fixed value of $p\in(1,\infty)$. We show that the norm bound $\|S\|_{L^p(w)\to L^{p,\infty}(w)}\lesssim[w]_{A_p}^{\max(\frac1p,\frac12)}$ is unimprovable. Actually, a lower bound with the exponent $\frac1p$ holds uniformly over all weights, which is the content of the next (straightforward) proposition. The optimality of the exponent $\frac12$ is slightly more tricky, and is based on a (standard) example of a specific weight.

\begin{Proposition}
For any weight $w$, we have
\begin{equation*}
  \|S \|_{L^p(w)\rightarrow L^{p,\infty}(w)}\ge [w]_{A_p}^{\frac 1p}.
\end{equation*}
\end{Proposition}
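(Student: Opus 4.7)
The plan is to test the weak-type inequality on the function $f = \sigma\, 1_Q$ for an arbitrary dyadic cube $Q$, using a single-term lower bound for $Sf$.

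First, the identity $\sigma = w^{1-p'}$ gives $\sigma^p w = \sigma \cdot (\sigma^{p-1} w) = \sigma$, so $\|1_Q \sigma\|_{L^p(w)}^p = \sigma(Q)$. Second, letting $\hat Q$ denote the dyadic parent of $Q$, the Haar function $h_{\hat Q}$ is constant on $Q$ with absolute value $|\hat Q|^{-1/2}$, which yields $|\langle h_{\hat Q}, 1_Q \sigma\rangle| = |\hat Q|^{-1/2} \sigma(Q)$. Retaining only this single term in the defining sum for $(Sf)^2$ gives the pointwise bound
\[
Sf(x) \ge \frac{\sigma(Q)}{|\hat Q|} \eqsim \langle\sigma\rangle_Q, \qquad x \in \hat Q \supseteq Q.
\]
Third, substituting $t \eqsim \langle\sigma\rangle_Q$ into the definition of $\|Sf\|_{L^{p,\infty}(w)}$ and using the trivial estimate $w(\hat Q) \ge w(Q)$ gives
\[
\|S\|_{L^p(w)\rightarrow L^{p,\infty}(w)} \gtrsim \langle\sigma\rangle_Q \cdot \Big(\frac{w(Q)}{\sigma(Q)}\Big)^{\frac 1p} = \bigl(\langle w\rangle_Q \langle\sigma\rangle_Q^{p-1}\bigr)^{\frac 1p},
\]
where the equality is obtained by writing $w(Q) = |Q|\langle w\rangle_Q$ and $\sigma(Q) = |Q|\langle\sigma\rangle_Q$. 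Taking the supremum over all dyadic $Q$ yields the lower bound with the claimed exponent $1/p$.

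I do not anticipate a real obstacle. The only delicate point is the numerical constant: the single-parent bound loses a dimensional factor at the step $Sf \gtrsim \langle\sigma\rangle_Q$, which may either be absorbed into an implicit constant or tightened by summing contributions from several ancestors of $Q$ in order to reach the sharp constant $1$ in the stated inequality.
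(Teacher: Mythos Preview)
Your argument is correct and close in spirit to the paper's: both test on a single dyadic cube and keep only one Haar coefficient in $Sf$. The two proofs differ only in how they avoid cancellation in $\langle h_I,f\rangle$ and in how they reach the sharp constant~$1$.

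The paper works on the cube $I$ itself rather than its parent: it sets $f=\operatorname{sgn}(h_I)\,|f|$, so that $\langle h_I,f\rangle=\langle |h_I|,|f|\rangle=|I|^{-1/2}\int_I|f|$, and hence $Sf\ge 1_I\langle |f|\rangle_I$ with \emph{no} constant lost. It then optimizes over $|f|$ via the converse H\"older inequality, obtaining $N\ge w(I)^{1/p}\sigma(I)^{1/p'}/|I|$ exactly. Your choice $|f|=\sigma\,1_Q$ is precisely the extremizer in that converse H\"older step, so the two computations coincide at the end; you simply plugged in the optimizer directly rather than deducing it.

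The only loss in your version comes from passing to the parent $\hat Q$, which costs a factor $|\hat Q|/|Q|=2^d$. The easiest way to recover the constant~$1$ is not to sum over ancestors but to adopt the paper's sign trick: take $f=\operatorname{sgn}(h_Q)\,\sigma\,1_Q$ and keep the term $I=Q$. Then $|\langle h_Q,f\rangle|=|Q|^{-1/2}\sigma(Q)$ and $Sf\ge 1_Q\langle\sigma\rangle_Q$ on the nose, and your remaining algebra gives $\|S\|_{L^p(w)\to L^{p,\infty}(w)}\ge(\langle w\rangle_Q\langle\sigma\rangle_Q^{p-1})^{1/p}$ without any implicit constant.
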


\begin{proof}
Let $N:=\|S \|_{L^p(w)\rightarrow L^{p,\infty}(w)}$ and consider
 $f=\operatorname{sgn}(h_I)\abs{f}$. Then $Sf\geq 1_I\abs{I}^{-1/2}\langle \abs{h_I},\abs{f}\rangle=1_I\ave{\abs{f}}_I$. Thus
\begin{equation*}
  N\|f\|_{L^p(w)}
  \geq \|1_I\ave{\abs{f}}_I\|_{L^{p,\infty}(w)}
  =\frac{w(I)^{1/p}}{\abs{I}} \int_I\abs{f}= \frac{w(I)^{1/p}}{\abs{I}} \int_I\abs{f} w^{-1}w
\end{equation*}
for all positive functions $\abs{f}$ on $I$. By the converse to H\"older's inequality, this shows that
\begin{equation*}
  N\geq\frac{w(I)^{1/p}}{\abs{I}}\|w^{-1}\|_{L^{p'}(w)}=\frac{w(I)^{1/p}\sigma(I)^{1/p'}}{\abs{I}},
\end{equation*}
and taking the supremum over all $I$ proves the claim.
\end{proof}

\begin{Proposition}
Let $\Phi$ be an increasing function such that
\begin{equation*}
  \|S \|_{L^p(w)\rightarrow L^{p,\infty}(w)}\le \Phi([w]_{A_p})
\end{equation*}
for all $w\in A_p$. Then $\Phi(t)\geq c t^{1/2}$.
\end{Proposition}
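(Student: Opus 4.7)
The case $1<p\le 2$ requires no work: since $[w]_{A_p}\ge 1$ and $1/p\ge 1/2$, the previous proposition already yields $\Phi([w]_{A_p})\ge [w]_{A_p}^{1/p}\ge [w]_{A_p}^{1/2}$. So we may assume $p>2$, and in that regime I would follow the counterexample of Lacey and Scurry \cite{LS2012} for the analogous sharpness in the power-type class $\Phi(t)=t^{\alpha}$.

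The plan is to construct, for each large parameter $M$, a weight $w_M\in A_p$ and a nonzero test function $f_M$ such that
\[
[w_M]_{A_p}\asymp M\qquad\text{and}\qquad \|Sf_M\|_{L^{p,\infty}(w_M)}\gtrsim M^{1/2}\|f_M\|_{L^p(w_M)}.
\]
Granted this, the hypothesis $\|S\|_{L^p(w_M)\to L^{p,\infty}(w_M)}\le \Phi([w_M]_{A_p})$ immediately gives $\Phi(M)\gtrsim M^{1/2}$, and the monotonicity of $\Phi$ extends the inequality to every $t\ge 1$. The natural building blocks are a nested dyadic tower $I_0\supset I_1\supset\cdots\supset I_N$ with $|I_k|=2^{-k}|I_0|$ and $N\asymp \log M$, and a length-$N$ Haar sum
\[
f_M:=\sum_{k=0}^{N-1}\epsilon_k h_{I_k}\qquad(\epsilon_k\in\{\pm 1\}),
\]
for which $\langle h_{I_k},f_M\rangle=\epsilon_k$ and hence
\[
Sf_M(x)^2=\sum_{k=0}^{N-1}\frac{1}{|I_k|}\asymp \frac{1}{|I_N|}\qquad(x\in I_N).
\]
This ``orthogonality identity'' is the source of the $\sqrt{N}$-gain, and it already gives the weak-type lower bound $\|Sf_M\|_{L^{p,\infty}(w_M)}\gtrsim |I_N|^{-1/2}w_M(I_N)^{1/p}$ with no further argument.

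The hard part, which is the content of the Lacey--Scurry construction, is the calibration of $w_M$ on the siblings $J_k:=I_{k-1}\setminus I_k$ and on the innermost cell $I_N$ so that simultaneously the $A_p$-characteristic $\langle w_M\rangle_{I_k}\langle w_M^{1-p'}\rangle_{I_k}^{p-1}$ is of order $M$ at every intermediate scale (and not accidentally smaller or larger at the extremes) and the $L^p(w_M)$-mass of $f_M$, which splits as a routine sum of $J_k$- and $I_N$-contributions, remains small enough that the ratio collapses to exactly $M^{1/2}$. The main technical obstacle is precisely this balancing act; the transition from the strong-type result of \cite{LS2012} to the weak-type statement here is then free, because the pointwise lower bound on $Sf_M$ over $I_N$ converts automatically into a weak-$L^{p,\infty}(w_M)$ lower bound, and the remaining comparison with $\|f_M\|_{L^p(w_M)}$ is the same geometric-series computation that appears in \cite{LS2012}.
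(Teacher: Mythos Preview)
Your plan has a real gap. The weak-type lower bound you extract from $I_N$ is
\[
\|Sf_M\|_{L^{p,\infty}(w_M)}\gtrsim |I_N|^{-1/2}w_M(I_N)^{1/p},
\]
but the very same cube contributes $|f_M|_{I_N}|^{p}\,w_M(I_N)$ to $\|f_M\|_{L^p(w_M)}^p$. On $I_N$ one has $f_M=\sum_{k=0}^{N-1}\epsilon_k h_{I_k}=\sum_{k}\pm |I_k|^{-1/2}$, a signed geometric sum with ratio $\sqrt 2$; for generic (and in particular for arbitrary) sign choices this is $\asymp |I_N|^{-1/2}$, which forces
\[
\|f_M\|_{L^p(w_M)}\gtrsim |I_N|^{-1/2}w_M(I_N)^{1/p},
\]
so the ratio you form is bounded by an absolute constant---no growth with $M$ at all. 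Making $|f_M|$ small on $I_N$ is possible only through delicate (Diophantine) sign choices, and those signs would then have to be reconciled with the $J_k$-contributions; this is not a ``routine sum'' and is nowhere carried out in your sketch. Calibrating $w_M$ alone cannot fix this, since both numerator and denominator scale with $w_M(I_N)^{1/p}$.

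This is also not what \cite{LS2012} does; that paper is about the \emph{weak}-type problem, and its sharpness argument (which the present paper follows) proceeds by duality. One passes from the assumed bound $S:L^p(w)\to L^{p,\infty}(w)$ to a vector-valued inequality of the form
\[
\Big\|\Big(\sum_Q\langle a_Q w\rangle_Q^2\mathbf 1_Q\Big)^{1/2}\Big\|_{L^{p'}(\sigma)}
\lesssim\Phi([w]_{A_p})\Big\|\Big(\sum_Q a_Q^2\Big)^{1/2}\Big\|_{L^{p',1}(w)},
\]
and then tests it with the power weight $w(x)=|x|^{\varepsilon-1}$ (so $[w]_{A_p}\asymp\varepsilon^{-1}$) and an explicit sequence $a_{[0,2^{-k})}$ supported on dyadic shells. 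The point of dualizing is exactly to sidestep the obstruction above: one no longer compares $Sf$ with $f$ in the same norm, so the pointwise comparability $Sf_M\asymp|f_M|$ is irrelevant. Your outline would need either this duality step or a genuinely new construction; as written, it does not close.
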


Lacey and Scurry \cite{LS2012} showed that this in the class of power functions, namely, they proved that there cannot be a bound of the form $\Phi(t)=t^{1/2-\eta}$ for $\eta>0$. The stronger claim above follows by an elaboration of their argument.

\begin{proof}
Following the same arguments as that in \cite{LS2012}, the assumption implies
\[
\Big\|  \Big( \sum_Q \langle a_Q\cdot w\rangle_Q^2 \mathbf 1_Q   \Big)^{1/2}   \Big\|_{L^{p'}(\sigma)}\lesssim \Phi([w]_{A_p}) \Big\| (\sum_Q a_Q^2)^{1/2}  \Big\|_{L^{p',1}(w)}
\]
for all sequences of measurable functions $a_Q$.
For $\varepsilon>0$, we consider $w(x)=\abs{x}^{\varepsilon-1}$ and a sequence of functions
\[
   a_{[0,2^{-k})}(x):=a_k(x):= \varepsilon^{\frac 12}\sum_{j=k+1}^\infty 2^{-\varepsilon(j-k)} \mathbf 1_{[2^{-j}, 2^{-j+1})}(x),\quad k\in \mathbb N.
\]
Then it is easy to check that $[w]_{A_p}\simeq w([0,1])\simeq \varepsilon^{-1}$ and
 $\sum_k a_k(x)^2 \lesssim \mathbf 1_{[0,1]}$ so that
\[
\Big\| (\sum_{k=1}^\infty a_k(x)^2)^{1/2}  \Big\|_{L^{p',1}(w)}\lesssim w([0,1])^{1/{p'}}.
\]
On the other hand,
\[
\langle a_k \cdot w\rangle_{[0, 2^{-k})}\simeq \varepsilon^{\frac 12} 2^k \sum_{j=k+1}^\infty 2^{-\varepsilon (j-k)} 2^{-\varepsilon j}\simeq \varepsilon^{-\frac 12} 2^{k(1-\varepsilon)}.
\]
It follows that
\[
   \int_{[0,1]}\Big( \sum_{k=1}^\infty \langle a_k\cdot w\rangle_{[0, 2^{-k})}^2 \mathbf 1_{[0,2^{-k})}   \Big)^{p'/2}  \textup{d} \sigma
   \simeq\varepsilon^{-p'/2-1} \simeq \varepsilon^{-p'/2} w([0,1]).
\]
By assumption, this implies $\varepsilon^{-1/2}\lesssim\Phi([w]_{A_p})\leq\Phi(c\varepsilon^{-1})$, and hence $\Phi(t)\gtrsim t^{1/2}$.
\end{proof}


\end{document}